\theoremstyle{definition}
\newtheorem{proposition}{Proposition}
\numberwithin{equation}{section}
\newcommand*\dd{\mathop{}\!\mathrm{d}}
\newcommand{\bx}{\boldsymbol{x}}
\newcommand{\bt}{{\boldsymbol{\theta}}}
\renewcommand{\dd}[1]{\ensuremath{\operatorname{d}\!{#1}}}
\newcommand{\norm}[1]{\left\Vert {#1}\right\Vert}
\newcommand{\cU}{\mathcal{U}}
\newcommand{\cM}{\mathcal{M}}
\newcommand{\bR}{\mathbb{R}}
\newcommand{\bfw}{\boldsymbol{w}}
\newcommand{\bftheta}{\boldsymbol{\theta}}
\newcommand{\bfx}{\boldsymbol{x}}
\newcommand{\Ucal}{\mathcal{U}}
\newcommand{\rdto}{r^{\text{DtO}}}
\newcommand{\rotd}{r^{\text{OtD}}}
\newcommand{\sfP}{\mathsf{P}}
\newcommand{\ip}[1]{\left\langle {#1}\right\rangle}
\newcommand{\bfP}{\boldsymbol{P}}
\newcommand{\bfeta}{\boldsymbol{\eta}}
\newcommand{\bfJ}{\boldsymbol{J}}
\newcommand{\Mcal}{\mathcal{M}}
\newcommand{\bfalpha}{\boldsymbol{\alpha}}
\newcommand{\bfbeta}{\boldsymbol{\beta}}
\title{
Sequential-in-time training of nonlinear parametrizations for solving time-dependent partial differential equations\thanks{Y.C., B.P.~were supported by the Office of Naval Research under award N00014-22-1-2728 and B.P.~additionally by the National Science Foundation under Grant No.~2046521.}
}
\author{Huan Zhang\thanks{Courant Institute of Mathematical Sciences, New York University, NY 
  (Corresponding author: pehersto@cims.nyu.edu)}
\and Yifan Chen\footnotemark[2]
\and Eric Vanden-Eijnden\footnotemark[2]
\and Benjamin Peherstorfer\footnotemark[2]}
\date{April 2024}
\newenvironment{keywords}%
   {\begin{trivlist}\item[]{\bfseries\sffamily Keywords:}\ }
   {\end{trivlist}}
\begin{document}

\maketitle
\begin{abstract}
Sequential-in-time methods solve a sequence of training problems to fit nonlinear parametrizations such as neural networks to approximate solution trajectories of partial differential equations over time. 
This work shows that sequential-in-time training methods can be understood broadly as either optimize-then-discretize (OtD) or discretize-then-optimize (DtO) schemes, which are well known concepts in numerical analysis. The unifying perspective leads to novel stability and a posteriori error analysis results that provide insights into theoretical and numerical aspects that are inherent to either OtD or DtO schemes such as the tangent space collapse phenomenon, which is a form of over-fitting. 
Additionally, the unified perspective facilitates establishing connections between variants of sequential-in-time training methods, which is demonstrated by identifying natural gradient descent methods on energy functionals as OtD schemes applied to the corresponding gradient flows. 
\end{abstract}

\begin{keywords}
neural networks, numerical methods for partial differential equations, Dirac--Frenkel variational principle, dynamic low-rank approximations, model reduction, gradient flows
\end{keywords}

\section{Introduction} We  introduce the setting of simulating time-dependent processes and systems that are given in the form of partial differential equations and discuss the need for nonlinearly parametrizing their solution fields. We continue with a literature review on sequential-in-time training methods for nonlinear parametrizations and sketch that these methods can be understood broadly as either discretize-then-optimize or optimize-then-discretize schemes, which outlines the paper.

\subsection{Simulating time-dependent processes and systems}
A core task of scientific computing and scientific machine learning is predicting the future behavior of time-dependent processes and systems. In many scenarios of interest, systems describe physical phenomena, where the goal of simulating them is to gain scientific insights or to solve engineering problems. A need to simulate time-dependent systems also arises in non-physical settings such as gradient flows corresponding to optimization and sampling in machine learning and other fields \cite{trillos2023optimization}. In any case, models of systems of interest are typically given in the form of time-dependent partial differential equations (PDEs). The task of simulating systems therefore becomes numerically solving PDEs, which often means approximating the solution fields with a parametrization (e.g., linear combination of basis functions, neural networks) that depends on a finite number of parameters (e.g., the coefficients of the linear combination, weights of network) and solving---training---for the parameters via algebraic equations \cite{FEMTheory,RotheDeuflhard}. 

\subsection{Limitations of linear parametrizations} The parametrization of the solution field critically influences the accuracy of the simulation result. We distinguish between linear and nonlinear parametrizations in the following. A parametrization is linear if the dependence on the parameters is linear, even though the dependence on spatial coordinates, time, coefficients, and other inputs of the PDEs can be nonlinear. Widely used linear parametrizations in scientific computing are linear combinations of basis functions. The basis functions are either local and centered at fixed grid points in the spatial domains of PDEs \cite{FEMTheory,RotheDeuflhard} or global as in spectral methods \cite{Gottlieb1977,BERNARDI1997209,Hesthaven_Gottlieb_Gottlieb_2007} and model reduction \cite{RozzaPateraSurvey,benner2015survey,interpbook,doi:10.1146/annurev-fluid-121021-025220}. Parametrizations based on linear combinations of basis functions have been shown to achieve fast error decays for a wide range of PDEs of interest; see \cite{MADAY2002289,10.1093/imanum/dru066} for details on the approximation-theoretic aspects of linear parametrizations.  
However, there are classes of PDEs for which linear parametrizations lead to slow error decays with respect to the number of parameters. One such class is given by PDEs over high-dimensional domains, for which grid-based methods can suffer from the curse of dimensionality \cite{bungartz_griebel_2004,Griebel_2006,Cohen_DeVore_2015}. Another class of problems for which linear approximations lead to slow error decays is given by PDEs that describe transport-dominated problems such as wave-like phenomena and strongly advecting flows  \cite{ROWLEY20001,Ohlberger16,Greif19,P22AMS}. For PDEs formulated over low-dimensional spatial domains, adaptive mesh refinement has been introduced to cope with transport; however, in settings such as kinetic equations over six-dimensional phase spaces and reduced models with global basis functions, mesh adaptation becomes either intractable or is not applicable and thus other parametrizations than linear ones are necessary. 

\subsection{Nonlinear parametrizations and global versus sequential-in-time training}\label{sec:Intro:Global} 
In this work, we consider parametrizations that depend nonlinearly on the parameters. Examples of nonlinear parametrizations are deep neural networks \cite{Goodfellow-et-al-2016}, tensor networks \cite{ORUS2014117,MAL-059}, and Gaussian wave packets \cite{BECK20001,lubich2008quantum}.
We distinguish between global-in-time and sequential-in-time methods for training nonlinear parametrizations. 
Methods that are global in time treat time as yet another variable, analogously to another spatial coordinate. Examples of global-in-time methods for nonlinear parametrizations are physics-informed neural networks \cite{RAISSI2019686}. Global-in-time methods are analogous to time-space discretizations in scientific computing. In contrast, sequential-in-time methods solve a sequence of problems to sequentially fit parameters so that the parametrized function approximates the PDE solution field over time. Thus, time is treated separately from the spatial coordinates and preserves its special meaning, which can be useful for preserving causality and keeping the number of parameters low \cite{doi:10.1137/050639703,BRUNA2024112588}. Methods that train sequentially in time share a close relationship with standard methods in scientific computing that discretize space first and then integrate forward in time a system of ordinary differential equations (ODEs).

\subsection{Literature review of sequential-in-time training methods for nonlinear parametrizations}\label{sec:Intro:LitReview}
There is a large body of literature on sequential-in-time training methods for nonlinear parametrizations. A key concept that is often employed is the Dirac-Frenkel variational principle 
\cite{dirac1930note, frenkel1934wave,Kramer1981}, which has been leveraged in particular by the computational chemistry community to numerically compute solutions of the Schr\"odinger equation with Gaussian wave packets \cite{MEYER199073,BECK20001,9073ba01-c8c8-3f30-b15c-e4b52a44e9da,lubich2008quantum}; we refer to \cite[Section~3.8]{lasser_lubich_2020} for a short history. Instead of formulating Galerkin conditions with respect to a test space that is fixed in time, the Dirac-Frenkel variational principle relies on the tangent space at the current solution of the manifold induced by the parametrization. The Dirac-Frenkel variational principle forms the foundation for dynamic low-rank approximations \cite{doi:10.1137/050639703,EINKEMMER2021110353,NobileDO,Peherstorfer15aDEIM,MUSHARBASH2018135,hesthaven_pagliantini_rozza_2022}, which have been extended to tensor formats \cite{doi:10.1137/09076578X,Arnold2014}. 
There is a range of works that apply the Dirac-Frenkel variational principle to other nonlinear parametrizations than Gaussian wave packets and matrix decompositions such as deep networks \cite{PhysRevE.104.045303,BRUNA2024112588,berman2023randomized,wen2023coupling,quantuminspired_NG,finzi2023a,kast2023positional} and nonlinear reduced approximations \cite{UngerTransformModes2020,anderson2021evolution,berman2024colora}. 
Closely related are particle methods \cite{doi:10.1146/annurev.fluid.37.061903.175753} that update functions represented as linear combinations of kernels over time; see also applications of meshless kernel methods for solving time dependent PDEs sequentially \cite{yan2023kernel}. The multiconfiguration time-dependent Hartree method propagates nonlinear parametrizations that are wavepackets over time \cite{MEYER199073,BECK20001}, which can be interpreted as tensor decompositions. For stochastic problems, dynamic orthogonal decomposition approaches have been proposed \cite{SAPSIS20092347,doi:10.1137/16M1109394,doi:10.1137/22M1534948,doi:10.1137/21M1431229}.
Other sequential-in-time methods for nonlinear parametrizations are proposed in the work  \cite{kvaal2023need} with Gaussian wave packets and in \cite{CHEN_implictnerual} with an implicit neural representation. A similar approach is taken by energetic variational methods \cite{wang2021particle,hu2023energetic}.   

\subsection{OtD and DtO schemes and summary of contributions}
The goal of this work is to show that there are two different broad types of sequential-in-time methods. There are optimize-then-discretize (OtD) schemes that first derive an optimization problem based on a parametrization to obtain a dynamical system in continuous time, which is then discretized in time and integrated forward. 
We show that schemes based on the Dirac-Frenkel variational principle are of the OtD type. The other type of methods are  discretize-then-optimize (DtO) schemes that first discretize in time and then solve a sequence of optimization problems, typically corresponding to boundary value problems, to sequentially fit the parameters. 
For example, the methods introduced in \cite{kvaal2023need,chen2023implicit} are DtO schemes.
Distinguishing between OtD and DtO  is analogous to distinguishing between the method of lines and the Rothe method in the context of linear parametrizations \cite{RotheDeuflhard}. The method of lines first discretizes in space and then numerically integrates the corresponding system of ODEs. In contrast, the Rothe method first discretizes in time to obtain a sequence of boundary value problems which are then numerically solved. Another analogy with linear approximations can be found in model reduction, where the authors of \cite{CARLBERG2017693} discuss least-squares Petrov-Galerkin methods that are DtO schemes and distinguish them to what they refer to as Galerkin methods that are OtD schemes. The same concept of OtD and DtO can be found in other settings such as inverse and control problems \cite{doi:10.1137/1.9780898718577,Becker2007} as well as in neural ordinary differential equations \cite{NEURIPS2018_69386f6b,ijcai2019p103,Onken2020DO} when fitting time series data.  

Distinguishing between DtO and OtD schemes enables a unified study of the theory and numerical aspects of sequential-in-time methods. 
We derive a posteriori error estimators as well as norm stability bounds for OtD and DtO schemes. The analysis gives insights into the properties of the two schemes: First, we will argue that OtD schemes have advantages in terms of implementation and numerical costs because if explicit time integration schemes are used, then it is sufficient to numerically solve linear least-squares problem at each time step. This is remarkable because the parametrization is nonlinear. However, OtD schemes can suffer from a phenomenon that we term tangent space collapse, which means that the residual is set orthogonal to a tangent space that looses expressiveness and thus residual components are ignored and can grow so that they lead to a deterioration of the solution accuracy. We also relate the tangent space collapse to the rank deficiency observed in dynamic low-rank approximations \cite{Lubich2014} and other methods based on the Dirac-Frenkel variational principle \cite{KAY1989165,10.1063/1.449204,PhysRevE.101.023313,feischl2024regularized}. 
We then provide an analysis of DtO schemes and derive a posteriori bounds that are unaffected by tangent space collapse phenomenon; however, this robustness of DtO schemes comes with typically higher computational costs because non-convex optimization problems have to be solved at each time step, instead of linear ones as in OtD schemes with explicit time integrators.

We also discuss how OtD and DtO schemes are related. We show that under strong assumptions, OtD schemes provide first-order approximations of solutions of DtO schemes. 
Besides problems stemming from physics applications that are modeled as PDEs, we consider PDEs that arise from gradient flows corresponding to optimization and sampling  problems \cite{trillos2023optimization}. In particular, we show that applying natural gradient descent to an energy functional over a parametric class is equivalent to applying an OtD scheme to the corresponding gradient flow equation. We thus can recover several algorithms based on natural gradient descent by applying variations of OtD schemes on the corresponding gradient flows, which offers insights for deriving a wider range of efficient algorithms.

\subsection{Outline of the paper}
In Section~\ref{sec: The DtO and OtD Framework}, we set the stage by introducing the PDE problems and the nonlinear parametrizations that we consider. Schemes of the OtD type are discussed in Section~\ref{sec:OtD}, including a posteriori error analysis and stability. We also discuss the importance of the tangent space in OtD schemes. The topic of Section~\ref{sec:DtO} are DtO schemes and their error analysis and stability. Section~\ref{sec:Discussion of OtD and DtO schemes} shows that only under very strong assumptions OtD and DtO schemes coincide. The section also connects OtD schemes on gradient flows to a range of sequential-in-time methods available in the literature. Conclusions are drawn in  Section~\ref{sec: Conclusions}.

\section{Nonlinear parametrizations for time-dependent PDEs}
\label{sec: The DtO and OtD Framework}
We discuss nonlinear parametrizations of solution fields of time-dependent PDEs.

\subsection{Setup}
\label{sec-Time-dependent nonlinear parametrizations of PDE solution fields}
Consider a time-dependent PDE over the spatial domain $\Omega\subseteq \mathbb{R}^d$:
\begin{equation}
\label{eq:Prelim:PDE}
\begin{aligned}
    \partial_t u(t,\bx) = f(t,\bx,u), \quad &\text{ for } (t,\bx) \in (0,T] \times \Omega\,,\\
    u(0,\bx)=u_0(\bx), \quad &\text{ for } \bx \in \Omega\, ,
\end{aligned}
\end{equation}
where the solution field is $u:[0,T]\times \Omega \to \mathbb{R}$ and the initial condition is $u_0: \Omega \to \mathbb{R}$. The right-hand side $f$ can contain partial derivatives of the function $u$. Time is denoted with $t$ and final time is $T$. 
In the following, we only consider situations where the equation \eqref{eq:Prelim:PDE} with appropriate boundary condition is well posed and admits a unique solution in $C^1([0,T],\cU)$, 
where $\cU$ is an appropriate Hilbert space of functions with domain $\Omega$ that can be embedded into $C^0(\Omega)$; thus functions in $\cU$ admit point-wise evaluations. 
The inner product on $\cU$ is denoted by $\langle\cdot,\cdot\rangle_{\Ucal}$, and $\|\cdot\|_{\Ucal}$ is the induced norm. We write the solution $u$ at time $t$ as $u(t, \cdot): \Omega \to \mathbb{R}$ to highlight that at a fixed time $t$, $u(t, \cdot)$ is a function of the spatial coordinate and is an element of  $\cU$. 
For simplicity of the discussion, we only consider Dirichlet boundary conditions.

\subsection{Nonlinear time-dependent parametrizations}\label{sec:Prelim:Param}
We parametrize the solution field $u$ as $\hat{u}(\bftheta(t),\cdot):\Omega \to \bR$, which depends on a finite-dimensional parameter $\bftheta(t) \in \Theta \subseteq \mathbb{R}^{p}$ that can vary with time $t$. For simplicity, we only consider the case where the boundary conditions that accompany \eqref{eq:Prelim:PDE} are imposed directly by the parametrization so that any function $\hat{u}(\bftheta, \cdot)$ with $\bftheta \in \Theta$ satisfies the boundary conditions. We further assume that $\hat{u}$ is sufficiently regular in both of its arguments. In particular, we only consider parametrizations with $\hat{u}(\bt, \cdot) \in \cU$ for all $\bt \in \Theta$ and $\|f(t,\cdot,\hat{u}(\bftheta,\cdot))\|_{\cU} < \infty$ for any $\bftheta \in \Theta$ and $t \in [0,T]$.

We highlight two properties of the parametrization $\hat{u}$: First, the parametrization $\hat{u}$ can depend nonlinearly on the parameter $\bftheta(t)$. For example, the parametrization can be given by deep neural networks \cite{Goodfellow-et-al-2016} and tensor networks \cite{ORUS2014117,MAL-059} with time-dependent parameter vector $\bftheta(t)$. The nonlinearity of such parametrizations is a key distinguishing feature compared to traditional parametrizations used in numerical analysis where the parameter $\bftheta(t)$ enters linearly (e.g., coefficients of linear combinations of basis functions that are centered at grid points). The nonlinear dependence on the parameter that we adopt here can be interpreted as adapting the representation of the solution field.  
Second, the parameter $\bftheta$ is a function of time $t$, which is in contrast to a wide range of global-in-time (or time-space) approaches that build on nonlinear parametrizations where time $t$ enters as an input but the parameters are fixed over time; see Section~\ref{sec:Intro:Global}. 
Because the parametrizations have time-dependent parameter vectors $\bftheta(t)$, it is necessary to determine the evolution of $\bftheta(t)$ such that $\hat{u}(\bftheta(t), \cdot)$ solves \eqref{eq:Prelim:PDE} in some numerical sense.

\section{Optimize-then-Discretize (OtD) schemes}
\label{sec:OtD}
In this section, we discuss OtD schemes to solve for the time-dependent parameter $\bftheta(t)$. 
Schemes based on OtD are analogous to the method of lines \cite{FEMTheory,RotheDeuflhard}, which first derives a semi-discrete system of ODEs 
and then discretizes and numerically integrates the system of ODEs in a second step. 
In case of nonlinear parametrizations, we formulate OtD schemes that first define a residual function based on the PDE and the nonlinear parametrization, which leads to an optimization problem that depends continuously on time $t$. 
The first-order optimality conditions determined by the residual objective over time $t$ can then be written as a dynamical system in $\bftheta(t)$, which subsequently is discretized and integrated forward in time. 

\subsection{Description of OtD schemes} We now describe OtD schemes. 
\subsubsection{Residual function in OtD schemes}
Plugging $\hat{u}(\bftheta(t),\cdot)$ into the PDE \eqref{eq:Prelim:PDE} and applying the chain rule leads to the OtD residual function
\begin{equation}\label{eq:Schemes:OtD:ResFun}
    \rotd(t, \bftheta(t),\dot{\bftheta}(t),\cdot)  = \nabla_\bt \hat{u}(\bftheta(t),\cdot)^T \dot{\bftheta}(t)-f(t,\cdot,\hat{u}(\bftheta(t), \cdot)) \,,
    \end{equation}
defined over the spatial domain $\Omega$. 
Here $\nabla_{\bftheta}\hat{u}(\bftheta(t), \cdot): \Omega \to \mathbb{R}^p$ is the gradient of $\hat{u}$ with respect to the parameter $\bftheta$ with component functions $\nabla_{\theta_i}\hat{u}(\bftheta(t), \cdot): \Omega \to \mathbb{R}$ for $i = 1, \dots, p$. 
Consider now the optimization problem  
\begin{equation}\label{eq:Schemes:OtD:RegProb:short}
\min_{\boldsymbol{\eta} \in \Theta} \left\|\rotd (t,  \bftheta(t), \boldsymbol{\eta}, \cdot)\right\|_{M}^2\, ,
\end{equation}
with the norm of the residual as objective. 
In \eqref{eq:Schemes:OtD:RegProb:short}, we have the auxiliary variable $\boldsymbol{\eta}$. The norm $\|\cdot\|_M$ is defined on $\cU$ with its corresponding inner product $\langle\cdot,\cdot\rangle_M$. It can be different from $\|\cdot\|_{\cU}$ but we need that $\norm{f(t,\cdot,\hat{u}(\bt,\cdot))}_M<\infty$ for all $t\in [0,T]$ and over all $\bt\in \Theta$; see Section~\ref{sec:Prelim:Param}.  

\subsubsection{Optimality conditions in OtD schemes} Taking the gradient of the objective of \eqref{eq:Schemes:OtD:RegProb:short} with respect to $\bfeta$ and setting it to zero determines a first-order optimal point of \eqref{eq:Schemes:OtD:RegProb:short}, which we identify as $\dot{\bftheta}(t)$ and which satisfy
\begin{equation}\label{eq:Schemes:OtD:FirstOrderOpti}
\langle\nabla_{\theta_i} \hat{u}(\bt(t), \cdot), \rotd(t, \bt(t), \dot{\bt}(t), \cdot))\rangle_M=0\,,\qquad i = 1, \dots, p\, .
\end{equation}
Interpreting the inner product to apply component-wise, we write equation \eqref{eq:Schemes:OtD:FirstOrderOpti} in the form of dynamics for $\bftheta(t)$ as
\begin{equation}\label{eq:Schemes:OtD:FirstOrderOpti_NG}
\left\langle\nabla_\bt \hat{u}(\bt(t), \cdot), \nabla_\bt \hat{u}(\bt(t), \cdot)  \right\rangle_M\dot{\bt}(t)=\left\langle\nabla_\bt \hat{u}(\bt(t), \cdot), f(t,\cdot,\hat{u}(\bt(t), \cdot)) \right\rangle_M\,.
\end{equation}
We define
\begin{equation}\label{eq:OtD:MatrixP}
    \bfP(\bftheta) = \langle\nabla_\bt \hat{u}(\bt, \cdot), \nabla_\bt \hat{u}(\bt, \cdot)\rangle_M\,,\quad
    \boldsymbol{F}(\bftheta) = \langle\nabla_\bt \hat{u}(\bt(t), \cdot), f(t,\cdot,\hat{u}(\bt(t), \cdot))\rangle_M\,,
\end{equation}to compactly write \eqref{eq:Schemes:OtD:FirstOrderOpti_NG} as
\begin{align}
    \bfP(\bftheta(t))\dot{\bt}(t)=\boldsymbol{F}(\bftheta(t))\,,
\label{eq:Schemes:OtD:FirstOrderOpti_NG_compact}
\end{align} which is a system of ODEs if the matrix $\bfP(\bftheta(t))$ is non-singular. 
Otherwise, system \eqref{eq:Schemes:OtD:FirstOrderOpti_NG_compact} can include differential-algebraic equations that attain multiple solutions; using the Moore–Penrose inverse of $\bfP(\bftheta)$ is one way to uniquely determine a system of ODEs. In either case, we assume the resulting ODEs \eqref{eq:Schemes:OtD:FirstOrderOpti_NG} are well-posed such that the solution $\bftheta(t) \in C^1([0,T])$; then $\hat{u}(\bftheta(t),\cdot)$ is also continuously differentiable in time because $\hat{u}$ is continuously differentiable in the parameter argument. 

System \eqref{eq:Schemes:OtD:FirstOrderOpti_NG} describes the normal equations of the least-squares problem
\begin{equation}\label{eq:Schemes:OtD:RegProb}
\operatorname*{min}_{\bfeta \in \Theta} \|\nabla_{\bftheta}\hat{u}(\bftheta(t), \cdot)^T \bfeta - f(t, \cdot, \hat{u}(\bftheta(t), \cdot))\|_M^2\,.
\end{equation}
It is numerically advantageous to directly solve the least-squares problem \eqref{eq:Schemes:OtD:RegProb} rather than the poorer conditioned normal equations \eqref{eq:Schemes:OtD:FirstOrderOpti_NG_compact}.   
We stress that the term inside the norm in \eqref{eq:Schemes:OtD:RegProb} is linear in the unknown $\bfeta$, even though the parameter $\bftheta(t)$ enters nonlinearly in the parametrization $\hat{u}$. 

A major challenge of OtD (and also of DtO) schemes is the numerical approximation of the inner product $\langle \cdot, \cdot \rangle_{M}$ in \eqref{eq:Schemes:OtD:FirstOrderOpti_NG} or equivalently the evaluation of the norm $\|\cdot\|_M$ in \eqref{eq:Schemes:OtD:RegProb}.
For certain specific combinations of nonlinear parametrizations and PDEs, such as Gaussian wave packets and variants of the Schr\"odinger equation \cite{lasser_lubich_2020}, the objective of \eqref{eq:Schemes:OtD:RegProb} can be computed analytically. For more general nonlinear parametrizations such as deep networks, it has been proposed to use quadrature rules \cite{PhysRevE.104.045303,berman2023randomized}, which can work well if the spatial domain $\Omega$ is of low dimensions. For higher dimensions,  
adaptive Monte Carlo methods have been developed~\cite{BRUNA2024112588,wen2023coupling}. 

\subsubsection{OtD schemes and the Dirac-Frenkel variational principle}
Since $\partial_t \hat{u}(\bftheta(t), \cdot) = \nabla_\bt \hat{u}(\bftheta(t),\cdot)^T \dot{\bftheta}(t)$, we can derive from \eqref{eq:Schemes:OtD:FirstOrderOpti_NG} that
\begin{equation}
\label{eqn-3.7}
\begin{aligned}
    \left\langle\nabla_\bt \hat{u}(\bt(t), \cdot), \partial_t \hat{u}(\bftheta(t), \cdot)  \right\rangle_M&=\left\langle\nabla_\bt \hat{u}(\bt(t), \cdot), f(t,\cdot,\hat{u}(\bt(t), \cdot)) \right\rangle_M \\
    &= \left\langle\nabla_\bt \hat{u}(\bt(t), \cdot), \sfP_{\bt} f(t,\cdot,\hat{u}(\bt(t), \cdot)) \right\rangle_M \,,
\end{aligned}
\end{equation}
where $\sfP_{\bt}$ is the projection operator onto the tangent space $T_{\hat{u}(\bt, \cdot)}\cM$ at $\hat{u}(\bftheta, \cdot)$ of a manifold induced by the parametrization $\cM = \{\hat{u}(\bftheta, \cdot) \,|\, \bftheta \in \Theta\}$; the projection is  
defined under the $\langle\cdot,\cdot\rangle_M$ inner product.  The tangent space $T_{\hat{u}(\bt, \cdot)}\cM$ at $\hat{u}(\bftheta, \cdot)$ is spanned by the component functions of the gradient $\nabla_{\bftheta}\hat{u}(\bftheta, \cdot)$. Equation \eqref{eqn-3.7} implies that the term $\partial_t \hat{u}(\bftheta(t), \cdot) - \sfP_{\bt} f(t,\cdot,\hat{u}(\bt(t), \cdot)) \in T_{\hat{u}(\bt, \cdot)}\cM$ is orthogonal to the tangent space $T_{\hat{u}(\bt, \cdot)}\cM$ and thus that the difference must be zero  because $\partial_t \hat{u}(\bftheta(t), \cdot) \in T_{\hat{u}(\bt, \cdot)}\cM$. We obtain the following evolution equation in the function space:
\begin{align}
    \partial_t \hat{u}(\bftheta(t), \cdot) = \sfP_{\bt(t)}f(t,\cdot,\hat{u}(\bftheta(t), \cdot))\, .
    \label{eq:projected_dynamics}
\end{align}
\begin{SCfigure}
\begin{tikzpicture}
\tikzset{x={(175:1cm)},y={(55:.7cm)},z={(90:1cm)}}
\shade[left color=blue!4,right color=blue!60,looseness=.5, draw=black]  (2.5,-2.5,-1) 
to[bend left] (2.5,2.5,-1)
to[bend left] coordinate (mp) (-2.5,2.5,-1)
to[bend right] (-2.5,-2.5,-1)
to[bend right] coordinate (mm) (2.5,-2.5,-1)
-- cycle;
\shade[opacity=0.4, draw=black] (2.5,-2.5,0) -- (2.5,2.5,0) -- (-2.5,2.5,0) -- (-2.5,-2.5,0) -- cycle;
\draw[-stealth, thick] (-0.5,0.08,0) -- coordinate[pos=.3] (f) (1.0,1.0,1);
\draw[-stealth] (-0.5,0.08,0) -- coordinate[pos=.3] (f) (1.0,1.0,0);
\draw[dotted] (1.0,1.0,1) -- (1.0,1.0,0);
\draw[solid]  (1.0,1.0,0) -- (1.0,1.0,0.25) -- (0.75,0.83,0.25) -- (0.75,0.83,0);
\filldraw (-0.5,0.08,0) circle (1pt);
\node[right, scale=0.8] at (-0.1,-0.5,0) {{$\hat{u}(\bftheta(t), \cdot)$}};
\node[right, scale=0.8] at (1.7,-0.3,0.2) {{$\partial_t\hat{u}(\bftheta(t), \cdot)$}};
\node[above, scale=0.8] at (1.0,1.0,1.0) {{$f(\cdot, \hat{u}(\bftheta(t),\cdot))$}};
\draw (-1.5,2.5,0) node[above, rotate=-4] {\small{tangent space at $\hat{u}(\bftheta(t),\cdot)$}}; 
\draw (0,-4.1,0) node[right] {$\mathcal{M}$};
\end{tikzpicture}
	\caption{The Dirac--Frenkel variational principle \cite{dirac1930note, frenkel1934wave,Kramer1981,lubich2008quantum} determines the time derivative $\dot\bftheta(t)$ of the parameter $\bftheta(t)$ via the orthogonal projection of the right-hand side $f( t, \cdot, \hat{u}(\bftheta(t), \cdot))$ onto the tangent space of the manifold $\cM$ at the current solution field $\hat{u}(\bftheta(t), \cdot)$. The tangent space is spanned by the component functions of the gradient $\nabla_{\bftheta}\hat{u}(\bftheta(t), \cdot)$.}
	\label{fig:manifold_illustration_plain}
\end{SCfigure}
Through the lens of \eqref{eq:projected_dynamics}, the optimization step in OtD schemes can be interpreted using the Dirac-Frenkel principle; see Figure~\ref{fig:manifold_illustration_plain} and the discussion in Section~\ref{sec:Intro:LitReview}.  

Equation \eqref{eq:projected_dynamics} plays a key role in the error and stability analysis of the continuous OtD dynamics in the subsequent sections. It is noteworthy that while the well-definedness of \eqref{eq:Schemes:OtD:FirstOrderOpti_NG} in the parameter space may incur necessary extra discussions when $\boldsymbol{P}(\bt)$ is singular (i.e., we need to pick a specific solution of the linear system and ensure this choice leads to a well-posed ODE) and different dynamics of $\bftheta(t)$ may arise, equation \eqref{eq:projected_dynamics} remains consistently well-defined in the function space and has the same formula given any of these dynamics of $\bftheta(t)$. 
Therefore, in the continuous OtD dynamics, whether $\boldsymbol{P}(\bt(t))$ is singular or not will not affect the form of the equation in the function space and a posteriori error and stability analysis remain the same; see Section~\ref{sec-A posteriori error analysis of OtD schemes}--\ref{sec:OtD:Stability}.

\subsubsection{Discretization in time}\label{sec:OtD:DiscTime}
We set the time-step size to $\delta t > 0$ and denote the time steps as $k \in \mathbb{N}$ corresponding to times $t_k = k \delta t$ with the time-discrete approximations $\bftheta_0, \bftheta_1, \bftheta_2, \dots$ of the time-continuous parameter functions $\bftheta(0), \bftheta(t_1), \bftheta(t_2), \dots$. The time-continuous least-squares problem \eqref{eq:Schemes:OtD:RegProb} (or equivalently its normal equations  \eqref{eq:Schemes:OtD:FirstOrderOpti_NG}) 
can be discretized with off-the-shelf time integrators. For example, we can discretize in time with the $\zeta$-scheme, where $\zeta \in [0, 1]$. 

Setting $\zeta = 1$ corresponds to an explicit Euler time discretization of \eqref{eq:Schemes:OtD:RegProb} as
\begin{equation}\label{eq:OtD_explicitReg}
\operatorname*{min}_{\bftheta_{k + 1} \in \Theta} \|\nabla_{\bftheta}\hat{u}(\bftheta_k, \cdot)^T (\bftheta_{k + 1} - \bftheta_k) - \delta t f(t_k, \cdot, \hat{u}(\bftheta_k, \cdot))\|_M^2\,,\qquad k \in \mathbb{N}\,,
\end{equation}
and of the normal equations \eqref{eq:Schemes:OtD:FirstOrderOpti_NG} as
\begin{align}
    \left\langle\nabla_\bt \hat{u}(\bt_{k},\cdot), \nabla_\bt \hat{u}( \bt_{k},\cdot)\right\rangle_M (\bt_{k+1}-\bt_{k})=\delta t\left\langle\nabla_\bt \hat{u}(\bt_{k},\cdot),f(t_{k},\cdot, \hat{u}(\bt_{k},\cdot))\right\rangle_M \,.
    \label{eq:OtD_explicit}
\end{align}
For $\zeta = 0$, the implicit Euler discretization leads to
\begin{equation}\label{eq:OtD_implicit}
\operatorname*{min}_{\bftheta_{k + 1} \in \Theta} \|\nabla_{\bftheta}\hat{u}(\bftheta_{k + 1}, \cdot)^T (\bftheta_{k + 1} - \bftheta_k) - \delta t f(t_{k+1}, \cdot, \hat{u}(\bftheta_{k + 1}, \cdot))\|_M^2\,,\quad k \in \mathbb{N}\,,
\end{equation}
and to analogous equations for the normal equations \eqref{eq:Schemes:OtD:FirstOrderOpti_NG}.  
In the time-continuous formulation via regression problem \eqref{eq:Schemes:OtD:RegProb} (and the normal equations \eqref{eq:Schemes:OtD:FirstOrderOpti_NG}), the unknown variable $\boldsymbol{\eta}$  enters linearly. When discretizing with an explicit scheme as in \eqref{eq:OtD_explicitReg}, the time-discrete system is also linear in the unknown $\bftheta_{k + 1}$. In contrast, because $\hat{u}$ depends nonlinearly on the parameter $\bftheta$, the regression problem \eqref{eq:OtD_implicit} corresponding to the implicit Euler discretization is nonlinear in $\bftheta_{k + 1}$ since $\bftheta_{k + 1}$ enters in the parametrization $\hat{u}$. 

\subsection{A posteriori analysis of OtD schemes}
\label{sec-A posteriori error analysis of OtD schemes}
In this section, we analyse the error  of OtD solutions. 
The main purpose of our analysis is demonstrating that the accuracy of OtD solutions critically depends on the tangent spaces, for which we can build on results from the literature \cite{9073ba01-c8c8-3f30-b15c-e4b52a44e9da,lubich2008quantum,lasser_lubich_2020} but go beyond by developing error and stability results that apply in more general settings. 

In Proposition \ref{eq:OtD:APosterioriBound}, we analyze the case where $f(t,\cdot,u)$ is Lipschitz in $u$. The proof follows standard arguments for deriving error bounds in solving ODEs; for example as in \cite{9073ba01-c8c8-3f30-b15c-e4b52a44e9da} in the context of the Dirac-Frenkel variational principle. The norm in which the error is measured can be chosen generically and is denoted by $\|\cdot\|$ with a corresponding inner product $\langle \cdot, \cdot \rangle$. 

\begin{proposition}{(See \cite{9073ba01-c8c8-3f30-b15c-e4b52a44e9da}.)}\label{eq:OtD:APosterioriBound} Consider the time-dependent PDE \eqref{eq:Prelim:PDE} and let $\bftheta(t)$ solve the continuous OtD dynamics \eqref{eq:projected_dynamics} so that $\hat{u}(\bt(t),\cdot)$ approximates $u$.  
Assume that there exists a non-negative constant $C$ such that for all $t \in [0,T]$ and $v_1,v_2 \in \cU$,
            \begin{align}
                \norm{f(t,\cdot,v_1)-f(t,\cdot,v_2)}\leq C\norm{v_1-v_2}\,.
                \label{eq:lipschitzcf}
            \end{align} 
Furthermore, assume that there exists a function $\varepsilon: [0,T]\to [0,\infty)$ so that 
\begin{equation}\label{eq:OtDAposteriori:FTangentSpace}
    \norm{f(t, \cdot, \hat{u}(\bt(t), \cdot))-\sfP_{\bftheta(t)} f(t, \cdot, \hat{u}(\bt(t), \cdot))}\leq \varepsilon(t)\, .
\end{equation}
Then, the following error bound holds:         \begin{align}\label{eq:OtDerrorbound}
            \norm{u(t, \cdot)-\hat{u}(\bt(t),\cdot)}\leq \textrm{e}^{Ct}\norm{u(0, \cdot)-\hat{u}(\bt(0),\cdot)}+\textrm{e}^{Ct}\int_0^t \textrm{e}^{-Cs}\varepsilon(s){\rm d}s\, .
        \end{align}
\end{proposition}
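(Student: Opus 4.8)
The plan is a standard Grönwall-type energy argument applied to the error function $e(t) := u(t,\cdot) - \hat{u}(\bftheta(t),\cdot) \in \cU$. Since $u \in C^1([0,T],\cU)$ and, by the standing assumptions, $\bftheta \in C^1([0,T])$ with $\hat{u}$ continuously differentiable in its parameter argument, the curve $t \mapsto e(t)$ is $C^1$ into $\cU$. First I would use the PDE \eqref{eq:Prelim:PDE} and the continuous OtD dynamics \eqref{eq:projected_dynamics} to write
\[
\partial_t e(t) = f(t,\cdot,u(t,\cdot)) - \sfP_{\bftheta(t)} f(t,\cdot,\hat{u}(\bftheta(t),\cdot)),
\]
and then split the right-hand side by adding and subtracting $f(t,\cdot,\hat{u}(\bftheta(t),\cdot))$:
\[
\partial_t e(t) = \bigl[f(t,\cdot,u(t,\cdot)) - f(t,\cdot,\hat{u}(\bftheta(t),\cdot))\bigr] + \bigl[f(t,\cdot,\hat{u}(\bftheta(t),\cdot)) - \sfP_{\bftheta(t)} f(t,\cdot,\hat{u}(\bftheta(t),\cdot))\bigr].
\]
The first bracket is the ``consistency'' contribution controlled by Lipschitz continuity of $f$, and the second is precisely the tangent-space residual appearing in \eqref{eq:OtDAposteriori:FTangentSpace}.

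Next I would differentiate $\|e(t)\|^2 = \langle e(t), e(t)\rangle$, obtaining $\tfrac{d}{dt}\|e(t)\|^2 = 2\langle e(t), \partial_t e(t)\rangle$. Applying Cauchy--Schwarz to each of the two bracketed terms and then invoking the Lipschitz bound \eqref{eq:lipschitzcf} on the first and the tangent-space residual bound \eqref{eq:OtDAposteriori:FTangentSpace} on the second gives
\[
\tfrac{d}{dt}\|e(t)\|^2 \le 2C\|e(t)\|^2 + 2\varepsilon(t)\|e(t)\|.
\]
Dividing by $2\|e(t)\|$ on the open set where $\|e(t)\| \neq 0$ yields the scalar differential inequality $\tfrac{d}{dt}\|e(t)\| \le C\|e(t)\| + \varepsilon(t)$, from which the integrating-factor form of Grönwall's inequality, namely $\tfrac{d}{dt}\bigl(\textrm{e}^{-Ct}\|e(t)\|\bigr) \le \textrm{e}^{-Ct}\varepsilon(t)$, gives exactly \eqref{eq:OtDerrorbound} after integration from $0$ to $t$ and using $\|e(0)\| = \|u(0,\cdot) - \hat{u}(\bftheta(0),\cdot)\|$.

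The only genuine technical point — and it is minor — is that $t \mapsto \|e(t)\|$ need not be differentiable at instants where $e(t) = 0$, so the passage from the inequality for $\|e(t)\|^2$ to the one for $\|e(t)\|$ must be justified. The standard remedy is to work with the smooth surrogate $\sqrt{\|e(t)\|^2 + \delta^2}$, for which $\tfrac{d}{dt}\sqrt{\|e(t)\|^2 + \delta^2} \le C\sqrt{\|e(t)\|^2 + \delta^2} + \varepsilon(t)$ (using $\|e(t)\| \le \sqrt{\|e(t)\|^2 + \delta^2}$), apply Grönwall to this smooth quantity, and then let $\delta \downarrow 0$; alternatively one argues with the upper Dini derivative of $\|e(t)\|$. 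Everything else is bookkeeping. I would also remark that, although $\sfP_{\bftheta(t)}$ is the orthogonal projection with respect to $\langle\cdot,\cdot\rangle_M$ rather than $\langle\cdot,\cdot\rangle$, the hypothesis \eqref{eq:OtDAposteriori:FTangentSpace} already measures the residual $f - \sfP_{\bftheta}f$ in the generic norm $\|\cdot\|$, so no compatibility between $\|\cdot\|$ and $\|\cdot\|_M$ enters the estimate.
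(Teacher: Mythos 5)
Your proposal is correct and follows essentially the same argument as the paper: differentiate $\|u(t,\cdot)-\hat{u}(\bftheta(t),\cdot)\|^2$, split the time derivative of the error into the Lipschitz part and the tangent-space projection residual, apply Cauchy--Schwarz and the two hypotheses, divide by the norm, and conclude with Gr\"onwall. The only (cosmetic) difference is how the non-differentiability of $t\mapsto\|e(t)\|$ at zeros is handled---you propose the $\sqrt{\|e(t)\|^2+\delta^2}$ regularization or Dini derivatives, while the paper restarts the integration from the last zero; both are standard and valid.
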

\begin{proof} The proof follows similar arguments as  \cite{9073ba01-c8c8-3f30-b15c-e4b52a44e9da}: 
        By the Cauchy-Schwartz inequality, property \eqref{eq:lipschitzcf}, and dynamics \eqref{eq:Prelim:PDE} and \eqref{eq:projected_dynamics},
        \begin{align}
            \begin{split}
            \frac{\dd{}}{\dd{t}}\|u(t, \cdot)-&\hat{u}(\bt(t),\cdot)\|^2= 2\left\langle u(t, \cdot)-\hat{u}(\bt(t),\cdot), \partial_t (u(t, \cdot)-\hat{u}(\bt(t),\cdot) )\right\rangle\\
            & = 2\left\langle u(t, \cdot)-\hat{u}(\bt(t),\cdot), f(t,\cdot,u(t,\cdot))-f(t, \cdot, \hat{u}(\bt(t), \cdot))\right\rangle+\\&\qquad2\left\langle u(t, \cdot)-\hat{u}(\bt(t),\cdot),f(t, \cdot, \hat{u}(\bt(t), \cdot))-\sfP_{\bftheta(t)} f(t, \cdot, \hat{u}(\bt(t), \cdot)) \right\rangle\\
            &\leq 2C\norm{u(t, \cdot)-\hat{u}(\bt(t),\cdot)}^2 + 2\varepsilon(t)\norm{u(t, \cdot)-\hat{u}(\bt(t),\cdot)}.
             \end{split}
        \end{align} 
        Now, dividing by $\norm{u(t, \cdot)-\hat{u}(\bt(t),\cdot)}$ on both sides, we have
        \begin{align}
        \label{eqn-3-17}
            \frac{\dd{}}{\dd{t}}\norm{u(t, \cdot)-\hat{u}(\bt(t),\cdot)}&\leq C\norm{u(t, \cdot)-\hat{u}(\bt(t),\cdot)} + \varepsilon(t).
        \end{align}
     Applying Gr\"ownwall's inequality leads to the desired result \eqref{eq:OtDerrorbound}. Note that if the term $\norm{u(t, \cdot) - \hat{u}(\bt(t),\cdot)}$ approaches zero at certain times so that the norm becomes not differentiable, we can integrate \eqref{eqn-3-17} from the last point where zero occurs and extend it forward and so the bound that we obtain is still valid. 
\end{proof}

The error bound \eqref{eq:OtDerrorbound} depends on $\epsilon(t)$, which bounds the projection error of the right-hand side onto the tangent space of the nonlinear parametrization. 
Thus, the a posteriori analysis shows that the right-hand side of the PDE needs to be well approximated by the tangent space of the nonlinear parametrization at the current solution function $\hat{u}(\bftheta(t), \cdot)$; see Section~\ref{sec:OtD:TangentSpace} for details. 
In the special case where $\epsilon(t)=0$, meaning that the right-hand side function lies in the tangent space, the time evolution will not introduce additional errors beyond those in the initial condition, which is leveraged in, e.g., \cite{lasser_lubich_2020}. 

In the follow proposition, we consider the case where $f$ includes unbounded differential operators such as the Laplacian operator, which violates the Lipschitz assumption on $f$ that   Proposition~\ref{eq:OtD:APosterioriBound} relies on.  
We note that the smallest non-zero eigenvalue of $-\Delta$ with Dirichlet boundary conditions, denoted by $\lambda^*$, is defined as $\min_{u \in \mathcal{V}, u \neq 0} \frac{\|\nabla u\|_{L^2(\Omega)}}{\|u\|_{L^2(\Omega)}}$, where $\mathcal{V} = H_0^1(\Omega)$ \cite{evans2022partial}.

\begin{proposition}\label{eq:Prop:OtD:Laplace}
Consider the time-dependent PDE \eqref{eq:Prelim:PDE} with homogeneous Dirichlet boundary conditions. We assume the solution space $\cU$ embeds into the Sobolev space $H^2(\Omega)$ and that the right-hand side $f$ of \eqref{eq:Prelim:PDE} has the form 
$f(t, \cdot, u(t, \cdot))$ $= \Delta u(t, \cdot) + g(t, \cdot, u(t, \cdot)) $ and 
the function $g$ satisfies
\begin{align}
     \norm{g(t,\cdot,v_1)-g(t,\cdot,v_2)}_{L^2(\Omega)}\leq C\norm{v_1-v_2}_{L^2(\Omega)}\,,
    \label{eq:lipschitzcg}
\end{align} 
 with a non-negative constant $C$ for all $t \in [0,T]$ and $v_1,v_2 \in \cU$. Furthermore, let $\lambda^*>0$ be the smallest non-zero eigenvalue of $-\Delta$ over $\mathcal{V} = H_0^1(\Omega)$. Let now $\hat{u}(\bftheta(t), \cdot)$ solve the continuous OtD dynamics \eqref{eq:projected_dynamics} with $\langle\cdot,\cdot\rangle_M = \langle\cdot,\cdot\rangle_{L^2(\Omega)}$  
and let there exists a function $\varepsilon: [0,T]\to [0,\infty)$ so that 
\begin{equation}\label{eq:OtDAPosteriori:BoundGTangent}
    \norm{f(t, \cdot, \hat{u}(\bt(t), \cdot))-\sfP_{\bftheta(t)} f(\cdot, \hat{u}(\bt(t), \cdot))}_{L^2(\Omega)}\leq \varepsilon(t)\, .
\end{equation}
Then, the following error bound holds with constant $C_1 = C - \lambda^*$:
\begin{equation}
\label{eq:OtDerrorbound_laplacian}
\begin{aligned}
    \norm{u(t, \cdot)-\hat{u}(\bt(t),\cdot)}_{L^2(\Omega)}\leq \mathrm{e}^{C_1t}\left(\norm{u(0, \cdot)-\hat{u}(\bt(0),\cdot)}_{L^2(\Omega)}
    +\int_0^t \mathrm{e}^{-C_1s}\varepsilon(s){\rm d}s\right).
\end{aligned}
\end{equation}
\end{proposition}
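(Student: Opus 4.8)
The plan is to mimic the proof of Proposition~\ref{eq:OtD:APosterioriBound} but to exploit the linear dissipative structure of the Laplacian term rather than absorbing it into a Lipschitz constant. Write $e(t,\cdot) = u(t,\cdot)-\hat{u}(\bt(t),\cdot)$ and compute, as before,
\begin{equation*}
\frac{\dd{}}{\dd{t}}\|e(t,\cdot)\|_{L^2(\Omega)}^2 = 2\langle e(t,\cdot),\, \partial_t u(t,\cdot) - \partial_t \hat{u}(\bt(t),\cdot)\rangle_{L^2(\Omega)}.
\end{equation*}
Now substitute $\partial_t u = \Delta u + g(t,\cdot,u)$ and $\partial_t \hat{u}(\bt(t),\cdot) = \sfP_{\bt(t)}(\Delta \hat{u} + g(t,\cdot,\hat{u}))$ from \eqref{eq:projected_dynamics}, and split the right-hand side into three pieces exactly as in the earlier proof: the Laplacian difference $\Delta e$, the $g$-difference $g(t,\cdot,u)-g(t,\cdot,\hat{u})$, and the tangent-space projection defect $f(t,\cdot,\hat{u}) - \sfP_{\bt(t)} f(t,\cdot,\hat{u})$ where $f = \Delta + g$.

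The key new step is handling $\langle e,\Delta e\rangle_{L^2(\Omega)}$. Integration by parts with the homogeneous Dirichlet boundary conditions (which both $u$ and $\hat{u}(\bt(t),\cdot)$ satisfy, since the parametrization enforces them) gives $\langle e,\Delta e\rangle_{L^2(\Omega)} = -\|\nabla e\|_{L^2(\Omega)}^2$, and the Poincar\'e-type inequality defining $\lambda^*$ yields $\|\nabla e\|_{L^2(\Omega)}^2 \geq \lambda^* \|e\|_{L^2(\Omega)}^2$, hence $\langle e,\Delta e\rangle_{L^2(\Omega)} \leq -\lambda^*\|e\|_{L^2(\Omega)}^2$. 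The $g$-difference term is bounded by $C\|e\|_{L^2(\Omega)}^2$ using \eqref{eq:lipschitzcg} and Cauchy--Schwarz, and the projection defect term is bounded by $\varepsilon(t)\|e\|_{L^2(\Omega)}$ using \eqref{eq:OtDAPosteriori:BoundGTangent} and Cauchy--Schwarz. Collecting, we obtain
\begin{equation*}
\frac{\dd{}}{\dd{t}}\|e(t,\cdot)\|_{L^2(\Omega)}^2 \leq 2(C-\lambda^*)\|e(t,\cdot)\|_{L^2(\Omega)}^2 + 2\varepsilon(t)\|e(t,\cdot)\|_{L^2(\Omega)},
\end{equation*}
which after dividing by $\|e(t,\cdot)\|_{L^2(\Omega)}$ becomes $\frac{\dd{}}{\dd{t}}\|e(t,\cdot)\|_{L^2(\Omega)} \leq C_1\|e(t,\cdot)\|_{L^2(\Omega)} + \varepsilon(t)$ with $C_1 = C-\lambda^*$. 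Gr\"onwall's inequality then gives \eqref{eq:OtDerrorbound_laplacian}, with the same remark as in Proposition~\ref{eq:OtD:APosterioriBound} about integrating from the last zero of $\|e(t,\cdot)\|_{L^2(\Omega)}$ if differentiability fails.

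The main obstacle, such as it is, is mostly a matter of care rather than difficulty: one must verify that the integration by parts is legitimate, i.e.\ that $e(t,\cdot) \in H^2(\Omega) \cap H_0^1(\Omega)$ so that $\Delta e \in L^2(\Omega)$ and the boundary term vanishes --- this is where the hypotheses that $\cU$ embeds into $H^2(\Omega)$ and that the parametrization imposes the homogeneous Dirichlet conditions are used. A second subtlety is that $f$ in the projection-defect hypothesis \eqref{eq:OtDAPosteriori:BoundGTangent} includes the unbounded Laplacian, so one should note that $\Delta \hat{u}(\bt(t),\cdot) \in L^2(\Omega)$ by the $H^2$ assumption, making $f(t,\cdot,\hat{u}(\bt(t),\cdot))$ and its projection well-defined elements of $L^2(\Omega)$; the bound then follows just as the analogous step in the Lipschitz case. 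Everything else is a direct transcription of the previous proof.
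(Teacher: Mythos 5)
Your proposal is correct and follows essentially the same route as the paper's proof: the same three-way splitting of $\partial_t(u-\hat{u})$, the same use of $\langle e,\Delta e\rangle_{L^2(\Omega)}\leq -\lambda^*\|e\|_{L^2(\Omega)}^2$ (which the paper invokes as the spectral property of the Laplacian and you derive via integration by parts plus the Poincar\'e characterization of $\lambda^*$), and the same Gr\"onwall conclusion. Your extra remarks on the $H^2$ regularity needed to justify the integration by parts and the well-definedness of the projection defect are sound refinements of details the paper leaves implicit.
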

\begin{proof}
By direct calculations, we have
        \begin{align}
            \begin{split}
            &\quad\frac{\dd{}}{\dd{t}}\norm{u(t, \cdot)-\hat{u}(\bt(t),\cdot)}^2_{L^2(\Omega)}\\
            &= 2\left\langle u(t, \cdot)-\hat{u}(\bt(t),\cdot), \partial_t (u(t, \cdot)-\hat{u}(\bt(t),\cdot) )\right\rangle_{L^2(\Omega)}\\& =
            2\left\langle u(t, \cdot)-\hat{u}(\bt(t),\cdot), \Delta (u(t, \cdot)-\hat{u}(\bt(t),\cdot) )\right\rangle_{L^2(\Omega)}\\
            &
            \qquad +2\left\langle u(t, \cdot)-\hat{u}(\bt(t),\cdot), g(t,\cdot,u(t,\cdot))-g(t, \cdot, \hat{u}(\bt(t), \cdot))\right\rangle_{L^2(\Omega)}\\
            &\qquad +2\left\langle u(t, \cdot)-\hat{u}(\bt(t),\cdot),f(t, \cdot, \hat{u}(\bt(t), \cdot))-\sfP_{\bftheta(t)} f(t, \cdot, \hat{u}(\bt(t), \cdot)) \right\rangle_{L^2(\Omega)}\label{eq:replacable_with_linear}\\
            &\leq -2\lambda^* \norm{u(t, \cdot)-\hat{u}(\bt(t),\cdot)}^2_{L^2(\Omega)}+2C\norm{u(t, \cdot)-\hat{u}(\bt(t),\cdot)}^2_{L^2(\Omega)} + \\&\qquad 2\varepsilon(t)\norm{u(t, \cdot)-\hat{u}(\bt(t),\cdot)}_{L^2(\Omega)}\,.
             \end{split}
        \end{align} 
        where in the last inequality we have used the spectral property of the Laplacian operator: $\langle u(t, \cdot)-\hat{u}(\bt(t),\cdot), \Delta (u(t, \cdot)-\hat{u}(\bt(t),\cdot))\rangle_{L^2(\Omega)}\leq -\lambda^* \norm{u(t, \cdot)-\hat{u}(\bt(t),\cdot)}^2_{L^2(\Omega)}$ with $\lambda^*>0$. 
        Therefore, using the same argument as in the proof of Proposition \ref{eq:OtD:APosterioriBound}, we obtain
        \begin{align*}
            \frac{\dd{}}{\dd{t}}\norm{u(t, \cdot)-\hat{u}(\bt(t),\cdot)}_{L^2(\Omega)}
            &\leq (C-\lambda^*) \norm{u(t, \cdot)-\hat{u}(\bt(t),\cdot)}_{L^2(\Omega)}+\varepsilon(t),
        \end{align*}
        with which we can derive \eqref{eq:OtDerrorbound_laplacian} via Gr\"ownwall's inequality.
\end{proof}

The error bound \eqref{eq:OtDerrorbound_laplacian} depends on the accumulation of the projection error bounded by $\epsilon(t)$, analogous to the bound obtained in Proposition~\ref{eq:OtD:APosterioriBound}. 
If an asymmetric linear operator $\mathcal{A}$ can be isolated from either $f$ or $g$, a tighter error bound can be derived. Specifically, observing that $\left\langle v, \mathcal{A} v\right\rangle=-\left\langle \mathcal{A} v, v\right\rangle=0$ holds, the asymmetric part does not contribute to the growth of $\norm{u(t, \cdot)-\hat{u}(\bt(t),\cdot)}^2_{L^2(\Omega)}$. As an illustrative example, such an operator may arise from an advection term $\mathcal{A}v = \boldsymbol{b}\cdot \nabla v$ where $\boldsymbol{b}: \Omega\to \mathbb{R}^{d}$ is a divergence-free drift. This is evident as $\left\langle v, \boldsymbol{b}\cdot \nabla v\right\rangle =-\left\langle \nabla \cdot (\boldsymbol{b}v ), v\right\rangle=-\left\langle (\nabla \cdot\boldsymbol{b}) v, v\right\rangle-\left\langle \boldsymbol{b}\cdot\nabla v, v\right\rangle
=-\left\langle \boldsymbol{b}\cdot\nabla v, v\right\rangle$.

\subsection{The importance of tangent spaces}\label{sec:OtD:TangentSpace} 
The error analysis provided by Proposition~\ref{eq:OtD:APosterioriBound} and \ref{eq:Prop:OtD:Laplace} shows that the accuracy of the OtD solutions is critically influenced by the accumulation of the  projection error of the right-hand side onto the tangent spaces. 

\subsubsection{Choice of parametrization} The importance of the tangent space can be used to inform the choice of the nonlinear parametrization. For example, when solving semi-classical Schr\"odinger equations with a Gaussian wave packets ansatz, then the tangent spaces exactly represent the right-hand side functions as long as the potential used in the Schr\"odinger equation is quadratic \cite{lasser_lubich_2020}. Another example is given by the linear advection equation with right-hand side $f(t,\cdot, u(t,\cdot)) = \nabla u(t,\cdot)^T\bfw$, where $\bfw \in \mathbb{R}^d$. Consider $\bftheta = [\bfalpha; \bfbeta]$ with $\bfalpha\in \mathbb{R}^{d}, \bfbeta\in \mathbb{R}^{p_1}$ that define the parametrization $\hat{u}(\bt, \bfx)=\Phi(\bfx+\bfalpha; \bfbeta)$ with a function $\Phi: \Omega \times \mathbb{R}^{p_1} \to \mathbb{R}$. In this case, the right-hand side is $f(t,\cdot, \hat{u}(\bt,\cdot))= \nabla_{\bfx} \hat{u}(\bt, \cdot)^T\bfw=\nabla_{\bfalpha} \hat{u}(\bt,\cdot)^T\bfw$, which is in the tangent space $T_{\hat{u}(\bt, \cdot)}\Mcal$ because it is a linear combination of the component functions of $\nabla_{\bftheta}\hat{u}(\bftheta, \cdot)$. Thus, for the linear advection equation and the given parametrization, time-continuous OtD schemes incur no error over time.

\subsubsection{Collapsing tangent spaces}\label{sec:OtD:Collapsing} Even if $\hat{u}(\bftheta(t), \cdot)$ is a good approximation of $u(t, \cdot)$, it does not necessarily imply that the tangent space at $\hat{u}(\bftheta(t), \cdot)$ is expressive to approximate well the right-hand side via the projection $\sfP_{\bftheta(t)}$; see equation~\eqref{eq:projected_dynamics}. In fact, the component functions of the gradient $\nabla_{\bftheta} \hat{u}(\bftheta(t),\cdot)$ can become linearly dependent, at least numerically. Numerically linearly dependent means that the component functions are close to being  linearly dependent when the inner product between them is numerically estimated with Monte Carlo sampling or some other quadrature method. Equivalently, the matrix $\bfP(\bftheta)$ can become numerically singular or poorly conditioned \cite{KAY1989165,PhysRevE.101.023313,10.1063/1.449204,Lubich2014,kvaal2023need,berman2023randomized}. We refer to this phenomenon loosely as collapsing tangent space phenomenon. 
It is common that nonlinear parametrizations lead to gradients with (at least numerically) linearly dependent component functions, which can be related to the neural co-adaptation phenomena and overfitting in deep network approximations \cite{HintonDropout,srivastavaDropout,berman2023randomized}. This has major implications because it means that even though at time $t$ one obtains an accurate solution, the accuracy cannot be maintained as time evolves due to the collapse of the tangent space. 

The dynamics in the function space as formulated in \eqref{eq:projected_dynamics} remain well-posed even if the component functions of the gradient are linearly dependent; however, the loss of accuracy still applies. In terms of the dynamics formulated over the parameter $\bftheta(t)$ as in \eqref{eq:Schemes:OtD:FirstOrderOpti_NG}, the matrix $\bfP(\bt)$  can be singular due to collapsing tangent spaces and the Moore-Penrose inverse can be used to force a unique trajectory $\bftheta(t)$; again, which does not hinder the loss of accuracy even though the dynamics are well posed. For numerical time integration, small singular values of $\bfP(\bt)$ can still pose challenges. For dynamic low-rank approximations, where the parametrizations is a matrix or tensor decomposition with time-dependent factors, robust time integrators based on projector-splitting have been proposed \cite{Lubich2014,10.1093/amrx/abv006,doi:10.1137/18M116383X} for near-singular $\bfP(\bftheta)$. The projector-splitting can be applied to the spatially discretized system \cite{10.1093/amrx/abv006} and the continuous system \cite{doi:10.1137/18M116383X}. The work \cite{feischl2024regularized} proposes a minimal-norm regularization scheme to cope with the tangent space collapse.

\subsubsection{OtD dynamics with collapsed tangent spaces}\label{sec:OtD:TangentCollapseExample}
Let us consider a concrete example, where we use the nonlinear parametrization
\[
\hat{u}(\bt, x)=\sum\nolimits_{i=1}^N \beta_i \phi(x-\alpha_i)\,,\qquad x \in \mathbb{R}\,,
\]
with $\bt = [\alpha_1,...,\alpha_N, \beta_1, ..., \beta_N] \in \mathbb{R}^{2N}$ denoting the concatenation of all parameters. The nonlinear function $\phi: \mathbb{R} \to \mathbb{R}$ can be a Gaussian kernel function, for example. Consider now the scenario where, at time $t_0$, the parameter $\bftheta(t_0)$ of the  numerical approximation $\hat{u}(\bt(t_0), \cdot)$ satisfies $\alpha_i (t_0)=\alpha_j(t_0)$ and  $\beta_i(t_0)=\beta_j(t_0)$ for all $i,j = 1, \dots, N$. Such a parameter $\bftheta(t_0)$ means that there are at most two linearly independent components functions of $\nabla_{\bt} \hat{u}$ at  $\bt(t_0)$. 

As discussed in Section \ref{sec:OtD:Collapsing}, when the tangent space collapses, there are arbitrarily many parameter vectors $\bftheta(t)$ that solve the linear system corresponding to OtD schemes, necessitating the selection of a particular parameter vector. If the singular value decomposition is used to solve the regression problem \eqref{eq:Schemes:OtD:RegProb}, then the minimal Euclidean norm parameter vector is selected. In our particular case where $\nabla_{\alpha_i} \hat{u}(\bt(t), \cdot)) = \nabla_{\alpha_j} \hat{u}(\bt(t), \cdot)$ and 
$\nabla_{\beta_i} \hat{u}(\bt(t), \cdot)) = \nabla_{\beta_j} \hat{u}(\bt(t), \cdot)$ for all $i, j = 1, \dots, N$, the $i$-th and $j$-th and $i+N$-th and $j+N$-th columns of the matrix $\bfP(\bftheta(t))$ 
are the same, respectively. Therefore, the first term in the objective of \eqref{eq:Schemes:OtD:RegProb}, namely $\nabla_\bt \hat{u}(\bt(t), \cdot)^T \dot{\bftheta}(t)$, remains unchanged as long as $\sum_i \dot{\alpha}_i(t)$ and $\sum_i \dot{\beta}_i(t)$ remain constant. 
As a consequence, the solutions $\dot{\bftheta}(t)$ of \eqref{eq:Schemes:OtD:RegProb} lie on the manifold with $\sum_i \dot{\alpha}_i(t) = c_{\alpha}$, $\sum_i \dot{\beta}_i(t) = c_{\beta}$ for some constants $c_{\alpha}, c_{\beta}$. 
The minimal norm parameter vector in the Euclidean norm is the one with $\dot{\alpha}_i(t) = \dot{\alpha}_j(t)$ and $\dot{\beta}_i(t) = \dot{\beta}_j(t)$.  
Consequently, $\alpha_i(t) = \alpha_j(t)$ and $\beta_i(t) = \beta_j(t)$ will continue to hold throughout the evolution over time $t$, maintaining the degeneracy. The matrix $\bfP(\bt(t))$ will remain singular in the subsequent steps and thus the rank of the tangent space is not increased over time. In other words, the tangent space cannot get more expressive. Effectively, only one basis function contributes to the expressiveness of the nonlinear parametrization. As a result, the accuracy can be unsatisfactory.

\subsubsection{Avoiding tangent space collapse in OtD schemes} While the OtD dynamics remain valid for collapsing tangent spaces, it is still desired to avoid the collapse to maintain accurate solutions over time. To see this, first notice that  a tangent space collapse can drive solutions into parameter regions that cannot be easily escaped from anymore, as shown in the example in Section~\ref{sec:OtD:TangentCollapseExample}. 
One option to escape in such a situation is to artificially increase the rank as discussed in  \cite{Lubich2014,Ceruti2022,doi:10.1137/22M1534948}, in the context of projector-splitting and other robust time integration schemes for dynamic low-rank approximations. 

Another option that builds on randomized updates is proposed in \cite{berman2023randomized}: 
Recall that the parameter trajectory $\bftheta(t)$ is numerically computed via time integration schemes that take steps of the size $\delta t$ at each time step. Such a time integration leads to local movements in the parameter domain $\Theta$ and it thus can take many steps to escape poor regions. In particular, selecting a specific regularizer (e.g., minimal norm solutions by using the Moore-Penrose pseudo inverse of $\bfP(\bftheta)$) to enforce non-singular dynamics in the parameter $\bftheta(t)$  can drive $\bftheta(t)$ into regions that take a long time to escape from. An analogous issue of escaping local regions with only local steps is found in Markov chain Monte Carlo methods, where it can take a large number of steps before the low probability regions between two metastable states is traversed \cite{doi:10.1137/21M1425062}. To allow global steps, the work  \cite{berman2023randomized} proposes randomized sparse OtD schemes. These schemes update random sparse subsets of the components of $\bftheta(t)$ at each time step. The randomization of which components of $\bftheta(t)$ are updated can be interpreted as allowing global steps, which is empirically shown in \cite{berman2023randomized} to alleviate the collapsing tangent space phenomenon and the poor conditioning problem to some extent. 

Finally, we mention here already that DtO schemes avoid the tangent space collapsing phenomena in favor of typically higher optimization costs; see Section~\ref{sec:DtO}. We remark that discretizing OtD schemes in time with implicit time integrators can help to alleviate the tangent space collapsing phenomenon too: As shown in the objective of \eqref{eq:OtD_implicit}, the tangent space at the parameter $\bftheta_{k + 1}$ at the next time step $k + 1$ is used to represent the right-hand side. Thus, optimizing the objective \eqref{eq:OtD_implicit} seeks $\hat{u}(\bftheta_{k + 1}, \cdot)$ so that the corresponding tangent space is expressive for representing the right-hand side, which avoids relying on the tangent space at the current solution $\hat{u}(\bftheta_k, \cdot)$. However, implicit time discretizations of OtD schemes require solving non-convex optimization problems at each time step (see Section~\ref{sec:OtD:DiscTime}) and thus loose the major benefit of OtD schemes of typically incurring lower computational costs per time step compared to DtO schemes. At the same time, the implicit discretization avoids an issue that the time-continuous formulation is affected by, which is poised to lead to inconsistencies in the limit of smaller time-step sizes.

\subsection{Stability of continuous OtD dynamics}\label{sec:OtD:Stability}
In this section, we analyze the stability of the OtD dynamics, more specifically the growth of $\hat{u}(\bt(t), \cdot)$ under certain norms. 
The stability bounds are independent of a posteriori terms that are not readily available a priori such as the projection error bound $\epsilon(t)$ that is used in the error analysis provided in Section \ref{sec-A posteriori error analysis of OtD schemes}. 

\subsubsection{Stability when right-hand sides are bounded}\label{sec:OtD:StabBounded}
We now show bounds on the norm growth for problems with bounded right-hand sides $f$.  
\begin{proposition}\label{prop:OtDAnalysis:NormStab} Let $\hat{u}(\bt(t), \cdot)$ satisfy the continuous OtD dynamics \eqref{eq:projected_dynamics}. 
Assume there exist constants $C, C_0 > 0$ such that 
\begin{equation}\label{eq:OtDAnalysis:NormStab:FBound}
\norm{f(t, \cdot, \hat{u}(\bt(t),\cdot)}_M\leq C \norm{\hat{u}(\bt(t),\cdot)}_M+C_0\,.
\end{equation}
Then it holds that
\begin{equation}\label{eq:OtDAnalysis:BoundNormStab}
    \norm{\hat{u}(\bt(t),\cdot)}_M\leq \norm{\hat{u}(\bt(0),\cdot)}_M \mathrm{e}^{Ct}+\frac{C_0}{C}\big(\mathrm e^{Ct}-1\big)\,.
\end{equation}
\end{proposition}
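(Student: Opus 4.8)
The plan is to run the same energy-type argument as in the proof of Proposition~\ref{eq:OtD:APosterioriBound}, but applied to $\hat{u}(\bt(t),\cdot)$ itself instead of to the error $u(t,\cdot)-\hat{u}(\bt(t),\cdot)$. First I would differentiate $\|\hat{u}(\bt(t),\cdot)\|_M^2$ in time---this is legitimate since $\bftheta(t)\in C^1([0,T])$ and $\hat{u}$ is $C^1$ in its parameter argument, so $t\mapsto\hat{u}(\bt(t),\cdot)$ is a continuously differentiable curve in $\cU$---and substitute the continuous OtD dynamics \eqref{eq:projected_dynamics}:
\begin{equation*}
\frac{\dd{}}{\dd{t}}\norm{\hat{u}(\bt(t),\cdot)}_M^2 = 2\left\langle \hat{u}(\bt(t),\cdot),\, \sfP_{\bt(t)} f(t,\cdot,\hat{u}(\bt(t),\cdot))\right\rangle_M\,.
\end{equation*}

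Next I would estimate the right-hand side by the Cauchy--Schwarz inequality followed by the fact that $\sfP_{\bt(t)}$ is the $\langle\cdot,\cdot\rangle_M$-orthogonal projection onto the tangent space $T_{\hat{u}(\bt,\cdot)}\cM$ (see the discussion around \eqref{eqn-3.7}) and is therefore norm non-increasing, $\norm{\sfP_{\bt(t)} f}_M\le\norm{f}_M$. Combining this with the linear-growth hypothesis \eqref{eq:OtDAnalysis:NormStab:FBound} gives
\begin{equation*}
\frac{\dd{}}{\dd{t}}\norm{\hat{u}(\bt(t),\cdot)}_M^2 \le 2\norm{\hat{u}(\bt(t),\cdot)}_M\Big(C\norm{\hat{u}(\bt(t),\cdot)}_M + C_0\Big)\,.
\end{equation*}
Dividing both sides by $2\norm{\hat{u}(\bt(t),\cdot)}_M$ at times where this quantity is nonzero (so that the norm, not merely its square, is differentiable) yields the scalar differential inequality $\tfrac{\dd{}}{\dd{t}}\norm{\hat{u}(\bt(t),\cdot)}_M \le C\norm{\hat{u}(\bt(t),\cdot)}_M + C_0$.

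Finally I would conclude by Gr\"onwall's inequality---equivalently, multiply by the integrating factor $\mathrm{e}^{-Ct}$ and integrate from $0$ to $t$---which produces exactly \eqref{eq:OtDAnalysis:BoundNormStab}. I do not anticipate a genuine obstacle; the only point requiring a word of care is, exactly as in Proposition~\ref{eq:OtD:APosterioriBound}, the possibility that $\norm{\hat{u}(\bt(t),\cdot)}_M$ vanishes and the norm is momentarily non-differentiable, in which case I would restart the estimate from the last time the norm is zero and extend forward. This does not weaken the claim since the right-hand side of \eqref{eq:OtDAnalysis:BoundNormStab} is nonnegative and nondecreasing in $t$. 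The single slightly non-routine ingredient is the norm-non-expansiveness of $\sfP_{\bt(t)}$, which is immediate from its being an orthogonal projection with respect to precisely the inner product $\langle\cdot,\cdot\rangle_M$ that induces the norm in question.
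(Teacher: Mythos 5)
Your proposal is correct and follows essentially the same argument as the paper: differentiate $\|\hat{u}(\bftheta(t),\cdot)\|_M^2$, substitute \eqref{eq:projected_dynamics}, use Cauchy--Schwarz together with the non-expansiveness of the orthogonal projection $\sfP_{\bftheta(t)}$ in the $M$-norm, invoke the growth bound \eqref{eq:OtDAnalysis:NormStab:FBound}, and conclude by Gr\"onwall. The only cosmetic difference is that the paper first moves the projection onto $\hat{u}(\bftheta(t),\cdot)$ via self-adjointness and bounds $\|\sfP_{\bftheta(t)}\hat{u}\|_M\le\|\hat{u}\|_M$, whereas you bound $\|\sfP_{\bftheta(t)}f\|_M\le\|f\|_M$ directly; both rest on the same fact and yield the identical differential inequality.
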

\begin{proof}
Because $\sfP_{\bt(t)}$ is a projection with respect to $\langle \cdot, \cdot \rangle_M$, it holds $\norm{\sfP_{\bt(t)} w}_M\leq \norm{w}_M$.  
Using \eqref{eq:projected_dynamics}, we get
\begin{align*}
    \begin{split}
    \frac{\dd{}}{\dd{t}}\frac{1}{2}\norm{\hat{u}(\bt(t), \cdot)}_M^2&=\ip{\hat{u}(\bt(t), \cdot), \partial_t \hat{u}(\bt(t), \cdot)}_M=\ip{\hat{u}(\bt(t), \cdot),\sfP_{\bt(t)} f(t,\cdot, \hat{u}(\bt, \cdot))}_M\\&=\ip{\sfP_{\bt(t)}\hat{u}(\bt(t), \cdot),  f(t,\cdot, \hat{u}(\bt(t), \cdot))}_M\\&\leq \norm{\sfP_{\bt(t)} \hat{u}(\bt(t), \cdot)}_M\norm{ f(t,\cdot, \hat{u}(\bt(t), \cdot))}_M\\&\leq  C\norm{\hat{u}(\bt(t), \cdot)}_M^2+ C_0  \norm{\hat{u}(\bt(t),\cdot)}_M\,.
    \end{split}
\end{align*}
As a consequence, we obtain
\begin{align}
    \frac{\dd{}}{\dd{t}}\norm{\hat{u}(\bt(t), \cdot)}_M&\leq  C\norm{\hat{u}(\bt(t), \cdot)}_M+ C_0 \,.
\end{align}
Applying Gr\"ownwall's lemma leads to \eqref{eq:OtDAnalysis:BoundNormStab}. 
\end{proof}

When $f(t,\cdot,u)=(\mathcal{A}u) (\cdot)$, where $\mathcal{A}$ is an anti-symmetric linear operator such that $\ip{w, \mathcal{A}v}_M=-\ip{\mathcal{A}w, v}_M$, then the norm $\norm{\hat{u}(\bt(t),\cdot)}_M$ does not grow. 
The preservation of norm has been demonstrated in the solution of the semi-classical Schrödinger equation with Gaussian wave packets~\cite{lasser_lubich_2020}, where $\mathcal{A}=-i H$ and $H$ represents a Hamiltonian operator.

\subsubsection{Stability for right-hand sides with unbounded operators}\label{sec:OtD:StabUnBounded}
The following proposition builds on an assumption on the nonlinear parametrization that once more emphasizes the importance of the tangent spaces:
\begin{equation}
    \label{eq:Asm:UInT}
    \hat{u}(\bt, \cdot) \in T_{\hat{u}(\bt, \cdot)}\cM\,,\qquad \text{ for all } \bt \in \Theta\,.
\end{equation}
Assumption \eqref{eq:Asm:UInT} holds for a wide range of parametrizations. For example, in \cite[Section 3]{lasser_lubich_2020}, it is shown that \eqref{eq:Asm:UInT} holds for parametrizations based on Gaussian wave packets. More generally, the assumption given in \eqref{eq:Asm:UInT} holds for feed-forward deep networks as long as the last layer is linear, in which case the network can be written as $\hat{u}(\bt, \cdot) = \sum_{i = 1}^N w_i \phi_i(\cdot; \bt') + b_i$, 
where $\bt = [\bt', w_1, \dots, w_N, b_1, \dots, b_N] \in \Theta$ is the parameter vector and the functions $\phi_1, \dots, \phi_N: \Omega \to \mathbb{R}$ correspond to inner layers. Note that the network can have multiple layers, which are encoded in $\phi_1, \dots, \phi_N$. By differentiating the network with respect to $\bt$ one can see that $\phi_1, \dots, \phi_N$ and the constant function are component functions of the gradient $\nabla_{\bt}\hat{u}$ and thus \eqref{eq:Asm:UInT} holds.

\begin{proposition}\label{prop:OtDAnalysis:NormStabLaplace}
Let $\hat{u}(\bt(t), \cdot)$ satisfy the continuous OtD dynamics \eqref{eq:projected_dynamics} with $\langle \cdot, \cdot \rangle_{M} = \langle \cdot, \cdot \rangle_{L^2(\Omega)}$ and homogeneous Dirichlet boundary condition. Assume further the parametrization satisfies \eqref{eq:Asm:UInT} and $\hat{u}(\bt(t), \cdot) \in H^2(\Omega)$. The right-hand side of \eqref{eq:Prelim:PDE} has the form $f(t, \cdot, u(t, \cdot)) = \Delta u(t, \cdot) + g(t, \cdot, u(t, \cdot))$ and there exist constants $C, C_0 > 0$ 
 such that $
 \|g(t, \cdot, \hat{u}(\bt(t), \cdot))\|_{L^2(\Omega)}\leq C \|\hat{u}(\bt(t), \cdot)\|_{L^2(\Omega)}+C_0\,.$ 
Then, 
\begin{equation}\label{eq:OtDAnalysis:BoundNormWithLaplace}
    \norm{\hat{u}(\bt(t),\cdot)}_{L^2(\Omega)}\leq \norm{\hat{u}(\bt(0),\cdot)}_{L^2(\Omega)} \textrm{e}^{(C-\lambda^*)t}+\frac{C_0}{C-\lambda^*}\big(\textrm{e}^{(C-\lambda^*)t}-1\big)\,,
\end{equation}
holds with $\lambda^*>0$ being the smallest non-zero eigenvalue of $-\Delta$ with Dirichlet boundary conditions.
\end{proposition}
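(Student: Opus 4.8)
The plan is to run the same energy estimate as in the proof of Proposition~\ref{prop:OtDAnalysis:NormStab}, but to use Assumption~\eqref{eq:Asm:UInT} to move the tangent-space projection off the (unbounded) right-hand side and onto $\hat{u}$ itself, and then to exploit the spectral gap of the Dirichlet Laplacian exactly as in Proposition~\ref{eq:Prop:OtD:Laplace}. First I would differentiate $\tfrac12\|\hat{u}(\bt(t),\cdot)\|_{L^2(\Omega)}^2$ in time and invoke \eqref{eq:projected_dynamics} to obtain
$\tfrac{\dd{}}{\dd{t}}\tfrac12\|\hat{u}(\bt(t),\cdot)\|_{L^2(\Omega)}^2 = \langle \hat{u}(\bt(t),\cdot), \sfP_{\bt(t)} f(t,\cdot,\hat{u}(\bt(t),\cdot))\rangle_{L^2(\Omega)}$.
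Since $\sfP_{\bt(t)}$ is the $L^2$-orthogonal (hence self-adjoint) projection onto $T_{\hat{u}(\bt(t),\cdot)}\cM$ and Assumption~\eqref{eq:Asm:UInT} gives $\sfP_{\bt(t)}\hat{u}(\bt(t),\cdot) = \hat{u}(\bt(t),\cdot)$, I can rewrite this as $\langle \sfP_{\bt(t)}\hat{u}(\bt(t),\cdot), f(t,\cdot,\hat{u}(\bt(t),\cdot))\rangle_{L^2(\Omega)} = \langle \hat{u}(\bt(t),\cdot), f(t,\cdot,\hat{u}(\bt(t),\cdot))\rangle_{L^2(\Omega)}$, i.e.\ the projection disappears and the full operator is retained.

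Next I would split $f = \Delta\hat{u} + g$. The diffusion term is handled by integration by parts, $\langle \hat{u}(\bt(t),\cdot), \Delta\hat{u}(\bt(t),\cdot)\rangle_{L^2(\Omega)} = -\|\nabla\hat{u}(\bt(t),\cdot)\|_{L^2(\Omega)}^2 \le -\lambda^*\|\hat{u}(\bt(t),\cdot)\|_{L^2(\Omega)}^2$, which is legitimate because $\hat{u}(\bt(t),\cdot)\in H^2(\Omega)$ and the homogeneous Dirichlet condition kills the boundary term, and because $\lambda^*$ is the smallest eigenvalue of $-\Delta$ on $H_0^1(\Omega)$. The lower-order term is controlled by Cauchy--Schwarz and the growth hypothesis, $\langle \hat{u}, g\rangle_{L^2(\Omega)} \le \|\hat{u}\|_{L^2(\Omega)}\|g\|_{L^2(\Omega)} \le C\|\hat{u}\|_{L^2(\Omega)}^2 + C_0\|\hat{u}\|_{L^2(\Omega)}$. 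Combining yields $\tfrac{\dd{}}{\dd{t}}\tfrac12\|\hat{u}(\bt(t),\cdot)\|_{L^2(\Omega)}^2 \le (C-\lambda^*)\|\hat{u}(\bt(t),\cdot)\|_{L^2(\Omega)}^2 + C_0\|\hat{u}(\bt(t),\cdot)\|_{L^2(\Omega)}$; dividing by $\|\hat{u}(\bt(t),\cdot)\|_{L^2(\Omega)}$ (with the same remark about non-differentiability at isolated zeros as in the proof of Proposition~\ref{eq:OtD:APosterioriBound}) gives the linear differential inequality $\tfrac{\dd{}}{\dd{t}}\|\hat{u}(\bt(t),\cdot)\|_{L^2(\Omega)} \le (C-\lambda^*)\|\hat{u}(\bt(t),\cdot)\|_{L^2(\Omega)} + C_0$, and Gr\"onwall's lemma then delivers \eqref{eq:OtDAnalysis:BoundNormWithLaplace}.

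The only genuinely non-routine point is the use of Assumption~\eqref{eq:Asm:UInT}: it is precisely what licenses replacing $\sfP_{\bt(t)} f$ by $f$ inside the inner product with $\hat{u}$, and hence keeping the full operator $\Delta\hat{u} + g$ so that the dissipative factor $-\lambda^*$ becomes available. Without \eqref{eq:Asm:UInT} one is left with $\langle\hat{u}(\bt(t),\cdot),\sfP_{\bt(t)}f\rangle_{L^2(\Omega)}$, which cannot be cleanly separated into a Laplacian part and a $g$ part, and one only recovers the weaker growth of Proposition~\ref{prop:OtDAnalysis:NormStab} with $C$ rather than $C-\lambda^*$. A minor technical care is ensuring $\Delta\hat{u}(\bt(t),\cdot)\in L^2(\Omega)$ so that the integration by parts is valid, which is exactly why $\hat{u}(\bt(t),\cdot)\in H^2(\Omega)$ is assumed.
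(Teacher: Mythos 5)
Your proposal is correct and follows essentially the same route as the paper's proof: use Assumption~\eqref{eq:Asm:UInT} together with self-adjointness of $\sfP_{\bt(t)}$ to drop the projection from the inner product, then split $f=\Delta\hat u+g$, apply the spectral bound for the Dirichlet Laplacian and the growth bound on $g$, and conclude via Gr\"onwall. The added remarks on integration by parts under the $H^2$ assumption and on dividing by the norm at isolated zeros are consistent with the paper's treatment.
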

\begin{proof}
With a parametrization that satisfies \eqref{eq:Asm:UInT}, it holds $\hat{u}(\bt, \cdot)=\sfP_{\bt} \hat{u}(\bt, \cdot)$. Using the same notation as in the proof of Proposition~\ref{prop:OtDAnalysis:NormStab}, we obtain
    \begin{align*}
        \begin{split}
        \frac{\dd{}}{\dd{t}}\frac{1}{2}\|\hat{u}(\bt(t), \cdot)&\|^2_{L^2(\Omega)}=\ip{\hat{u}(\bt(t), \cdot), \partial_t \hat{u}(\bt(t), \cdot)}_{L^2(\Omega)}\\
        &=\ip{\hat{u}(\bt(t), \cdot),\sfP_{\bt(t)} f(t,\cdot, \hat{u}(\bt, \cdot))}_{L^2(\Omega)}\\
        &=\ip{ \hat{u}(\bt(t), \cdot), f(t,\cdot, \hat{u}(\bt, \cdot))}_{L^2(\Omega)}\\
        &=\ip{ \hat{u}(\bt(t), \cdot),  \Delta \hat{u}(\bt, \cdot) }_{L^2(\Omega)}+\ip{ \hat{u}(\bt(t), \cdot), g(t,\cdot, \hat{u}(\bt, \cdot))}_{L^2(\Omega)}\\
        &\leq -\lambda^* \norm{\hat{u}(\bt(t), \cdot)}^2_{L^2(\Omega)}+C\norm{\hat{u}(\bt(t), \cdot)}^2_{L^2(\Omega)}+ C_0  \norm{\hat{u}(\bt(t),\cdot)}_{L^2(\Omega)},
        \end{split}
    \end{align*}
where we used in the last step the spectral property of the Laplacian operator as in the proof of Proposition~\ref{eq:Prop:OtD:Laplace}. 
We obtain
\begin{align}
    \frac{\dd{}}{\dd{t}}\norm{\hat{u}(\bt(t), \cdot)}_{L^2(\Omega)}\leq (C -\lambda^* )\norm{\hat{u}(\bt(t), \cdot)}_{L^2(\Omega)}+ C_0\, ,
\end{align}
to which we apply Gr\"ownwall's inequality for arriving at \eqref{eq:OtDAnalysis:BoundNormWithLaplace}. 
\end{proof}

\subsection{Remark on the analysis of time-discretization OtD schemes}
\label{sec:discretization}
We focused the error and stability analyses of OtD schemes on continuous dynamics.
In the case of discretized OtD schemes, one may further apply standard error and stability analysis theories for ODE discretizations \cite{HairerOne}. More specifically, if we employ Euler discretizations with a time-step size $\delta t$ to solve \eqref{eq:Schemes:OtD:FirstOrderOpti_NG}, 
then the time-discrete approximations $\bftheta_k$ of $\bftheta(t_k)$ ($t_k = k\delta t$) is of order $O(\delta t)$, assuming the second-order derivative $\ddot{\bftheta}(t)$ is bounded. According to the formulation of the ODE, the boundedness of $ \ddot{\bftheta}(t)$ is related to the behavior of the Hessian $\nabla^2_{\bftheta}\hat{u}(\bftheta(t),\cdot)$. This means the curvature of the manifold $\cM$ of the nonlinear parametrization matters. Once a bound in the $\bftheta$-space is obtained, one may transfer it into the function space, namely $\|\hat{u}(\bftheta(t_k),\cdot) - \hat{u}(\bftheta_k,\cdot)\|$, using a bound on $\nabla_{\bftheta}\hat{u}(\bftheta,\cdot)$. 

\section{Discretize-then-Optimize (DtO) schemes} 
\label{sec:DtO}
In this section, we discuss DtO schemes to solve for the parameter $\bftheta(t)$. 
Schemes based on DtO first discretize in time and then optimize for the time-discrete parameter. The analogous approach with linear parametrizations is the Rothe method \cite{Rothe1930,RotheDeuflhard}, which first discretizes time and then obtains a sequence of boundary value problems over function spaces that are then numerically solved. 

\subsection{Description of the DtO schemes}
We now describe DtO schemes.

\subsubsection{Discretization in time in DtO schemes}
Let $\delta t > 0$ be the time-step size for applying the $\zeta$-scheme to \eqref{eq:Prelim:PDE}, which leads to
\begin{equation}\label{eq:Schemes:DtO:DiscTime}
\frac{u_{k+1}(\bx)  - u_{k}(\bx) }{\delta t} = \zeta f(t_k, \bx, u_k) + (1 - \zeta) f(t_{k + 1}, \bx,  u_{k + 1}),\  \bx \in \Omega\,, k \in \mathbb{N}\,,
\end{equation}
where $u_{k} \in \cU$ approximates $u(t_k, \cdot)$. The initial condition determines $u_0$. Notice that \eqref{eq:Schemes:DtO:DiscTime} describes a sequence of boundary value problems, instead of an initial value problem as the original problem \eqref{eq:Prelim:PDE}. Discretizing with $\zeta = 0$ and $\zeta = 1$ corresponds to the implicit and explicit Euler method, respectively. 

\subsubsection{First-order optimality conditions and DtO schemes}
Now consider the parametrization $\hat{u}$ and plug it into \eqref{eq:Schemes:DtO:DiscTime} to obtain the residual function
\begin{equation}\label{eq:Schemes:DtO:ResFun}
\rdto_k(\bftheta, \cdot) = \hat{u}(\bftheta, \cdot) - \big(\hat{u}(\bftheta_k,\cdot) + \delta t \zeta f(t_k,\cdot, \hat{u}( \bftheta_k, \cdot)) + \delta t(1 - \zeta) f(t_{k + 1},\cdot, \hat{u}( \bftheta,\cdot))\big)\, .
\end{equation}
Notice that $\rdto_k$ is independent of a time derivative of the parameter, in contrast to the residual function \eqref{eq:Schemes:OtD:ResFun} of OtD schemes. 
Given $\bftheta_k$ from the previous time step $k$, a parameter at the next time step $k + 1$ can be obtained via the optimization problem
\begin{equation}\label{eq:Schemes:DtO:RegProb}
\min_{\bftheta \in \mathbb{R}^p} \left\|\rdto_k( \bftheta,\cdot)\right\|_M^2\, .
\end{equation}
Calculations show that first-order optimal points $\bftheta_{k + 1}$ satisfy
\begin{equation}
\label{eq:Schemes:DtO:FirstOrderOpti}
\left\langle\nabla_{\bftheta}\rdto_k( \bftheta_{k + 1},\cdot) , \rdto_k(\bftheta_{k + 1},\cdot)\right\rangle_M  =0\,,
\end{equation}
which can be interpreted as testing the residual at the component functions of the gradient of the residual $\rdto_k$. Notice the difference to OtD schemes and their corresponding first-order optimality conditions \eqref{eq:Schemes:OtD:FirstOrderOpti}, where the residual is tested against the component functions of the gradient of the parametrization $\hat{u}$.
In case of an explicit Euler time discretization ($\zeta = 1$ in \eqref{eq:Schemes:DtO:DiscTime}),  condition \eqref{eq:Schemes:DtO:FirstOrderOpti} admits the form
\begin{equation}\label{eq:Schemes:DtO:FirstOrderOpti:zeta_1}
\left\langle \nabla_\bt \hat{u}(\bt_{k + 1},\cdot), r_k^{\text{DtO}}(\bt_{k + 1},\cdot)\right\rangle_M  = 0,\  \qquad k \in \mathbb{N}\,,
\end{equation}
because the gradient of the residual becomes the gradient of the parametrization at $\bt_{k + 1}$. However, equation \eqref{eq:Schemes:DtO:FirstOrderOpti:zeta_1} is still different from the first-order optimality conditions of OtD schemes because the residual is tested at the gradient at the subsequent time step $k + 1$ rather than at $k$. Nevertheless, condition \eqref{eq:Schemes:DtO:FirstOrderOpti:zeta_1} will be useful later to establish the stability of the stationary point trajectory of DtO schemes; see Section~\ref{sec:stability analysis of DtO}. 

Let us add the remark that there are analogous schemes in model reduction with linear parametrizations, which lead to Galerkin versus least-squares Petrov Galerkin schemes \cite{CARLBERG2017693}. A loose analogy is that OtD schemes correspond to what the authors of \cite{CARLBERG2017693} call Galerkin schemes and DtO schemes to the least-squares Petrov-Galerkin schemes, except with the major difference that OtD and DtO schemes considered in this work here apply to nonlinear parametrizations whereas in classical model reduction only linear parametrizations are considered.

\subsubsection{Gauss-Newton method for DtO schemes}
As the dependence of $\hat{u}$ on $\bftheta$ is nonlinear, numerically solving \eqref{eq:Schemes:DtO:RegProb} can be challenging. Even if an explicit time integration scheme is used, one still has to solve a nonlinear (and non-convex) optimization problem at each time step; in contrast to OtD schemes. While DtO schemes are not suffering from collapsing tangent space phenomena in the sense that a posteriori error bounds in the following section are independent of the projection errors onto the tangent spaces, the optimization step is typically more challenging than in OtD schemes and insufficient optimization in DtO schemes can lead to similarly poor accuracy as collapsing tangent spaces in OtD schemes (see Section~\ref{sec-Connections and Differences Between OtD and DtO}). 

Of particular interest to us is applying the Gauss-Newton method \cite{WrightNumOpt} to the optimization problem \eqref{eq:Schemes:DtO:RegProb}. For applying Gauss-Newton optimization, we define the matrix function
\begin{equation}\label{eq:DtO:PrecondJ}
\bfJ_k(\bftheta) = \langle\nabla_\bt \rdto_k( \bt,\cdot), \nabla_\bt \rdto_k( \bt,\cdot)\rangle_M\,.
\end{equation}
Applying the Gauss-Newton method to \eqref{eq:Schemes:DtO:RegProb} then leads to the linear system
\begin{equation}\label{eq:Compute:DtO:GaussNewton}
\bfJ_k(\bftheta_{k + 1}^{(l)})\bftheta_{k + 1}^{(l + 1)} = \bfJ_k(\bftheta_{k + 1}^{(l)})\bftheta_{k + 1}^{(l)} - \alpha\langle \nabla_\bt \rdto_k(\bt_{k+1}^{(l)},\cdot), \rdto_k(\bt_{k+1}^{(l)},\cdot) \rangle_M, \qquad l \in \mathbb{N}\,,
\end{equation}
at time step $k$ with step size $\alpha$, where we denote the optimization iterations with $l \in \mathbb{N}$ and the intermediate approximations of the parameters as $\bftheta_k^{(l)}$. At $l = 0$, we set $\bftheta_{k + 1}^{(0)} = \bftheta_k$ and after $L$ iterations we stop the optimization iterations and set $\bftheta_{k+1} = \bftheta_{k+1}^{(L)}$.

We stress that other optimization methods than the Gauss-Newton method can be applied to the DtO regression problem \eqref{eq:Schemes:DtO:RegProb}. Following standard practice in machine learning, stochastic gradient descent and its variants provide viable options.

\subsection{A posteriori error analysis of DtO schemes}
\label{sec: Analysis of DtO}
We now provide an a posteriori error analysis for solutions obtained with DtO schemes. 

\begin{proposition}
\label{prop-DtO-error-Lipschitz}
Let the right-hand side $f$ of \eqref{eq:Prelim:PDE} be Lipschitz continuous in $u$ so that there exists a constant $C > 0$ such that 
\begin{equation}\label{eq:Lipschitzness_of_f}
        \norm{f(t,\cdot,v)-f(t,\cdot,w)}\leq C\norm{v-w}\,,
\end{equation}
for $v, w \in \cU$ and all times $t \in [0,T]$. Consider the explicit Euler discretization of \eqref{eq:Prelim:PDE} with time-step size $\delta t$. Let $e_k: \Omega \to \mathbb{R}$ denote the time-integration error at time step $k$ such that
\begin{equation}\label{eq:definition_ek}
e_k(\cdot)=u(t_{k + 1}, \cdot) - u(t_k, \cdot)-\delta t f(t, \cdot, u(t_k, \cdot)) ,\  \quad k \in \mathbb{N}\,.
\end{equation}
Let $\hat{u}(\bt_k, \cdot)$ be the solution obtained in the DtO scheme with the residual function $r^{\rm DtO}_k (\bt_{k+1},\cdot)$. 
Then at time $t_k$, the error of the DtO solution can be bounded as
\begin{multline}\label{eq:DtO:APostUnBoundBound}
\norm{u(t_k,\cdot)-\hat{u}(\bt_{k}, \cdot)} \leq (1+C\delta t)^k \norm{u(0,\cdot)-\hat{u}(\bt_{0}, \cdot)}+\\\sum\nolimits_{i=0}^{k-1} (1+C\delta t)^{k-i-1}   (\norm{e_i(\cdot)}+\|r^{\rm DtO}_i(\bt_{i+1},\cdot)\|)\,.
\end{multline}

\end{proposition}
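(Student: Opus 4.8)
The plan is to follow the standard one-step-error-propagation argument for explicit Euler, but to insert the DtO residual as an extra inhomogeneity at each step. First I would introduce the notation $\varepsilon_k(\cdot) = u(t_k,\cdot) - \hat{u}(\bt_k,\cdot)$ for the error at time step $k$, so the goal is to bound $\norm{\varepsilon_k}$. The key observation is that the DtO scheme with $\zeta = 1$ produces $\hat{u}(\bt_{k+1},\cdot)$ that approximately satisfies the explicit-Euler update, and the defect is exactly the residual $r^{\rm DtO}_k(\bt_{k+1},\cdot)$; that is, by the definition \eqref{eq:Schemes:DtO:ResFun} with $\zeta = 1$,
\begin{equation*}
\hat{u}(\bt_{k+1},\cdot) = \hat{u}(\bt_k,\cdot) + \delta t\, f(t_k,\cdot,\hat{u}(\bt_k,\cdot)) + r^{\rm DtO}_k(\bt_{k+1},\cdot)\,.
\end{equation*}

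Next I would subtract this from the exact relation $u(t_{k+1},\cdot) = u(t_k,\cdot) + \delta t\, f(t_k,\cdot,u(t_k,\cdot)) + e_k(\cdot)$, which is just the definition \eqref{eq:definition_ek} of the local truncation error. This yields the recursion
\begin{equation*}
\varepsilon_{k+1}(\cdot) = \varepsilon_k(\cdot) + \delta t\big(f(t_k,\cdot,u(t_k,\cdot)) - f(t_k,\cdot,\hat{u}(\bt_k,\cdot))\big) + e_k(\cdot) - r^{\rm DtO}_k(\bt_{k+1},\cdot)\,.
\end{equation*}
Taking norms, applying the triangle inequality, and using the Lipschitz bound \eqref{eq:Lipschitzness_of_f} on the difference of the $f$-terms gives
\begin{equation*}
\norm{\varepsilon_{k+1}} \leq (1 + C\delta t)\norm{\varepsilon_k} + \norm{e_k} + \norm{r^{\rm DtO}_k(\bt_{k+1},\cdot)}\,.
\end{equation*}

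Finally I would unroll this linear recursion: a discrete Grönwall / induction argument gives $\norm{\varepsilon_k} \leq (1+C\delta t)^k \norm{\varepsilon_0} + \sum_{i=0}^{k-1}(1+C\delta t)^{k-i-1}\big(\norm{e_i} + \norm{r^{\rm DtO}_i(\bt_{i+1},\cdot)}\big)$, which is exactly \eqref{eq:DtO:APostUnBoundBound} since $\varepsilon_0 = u(0,\cdot) - \hat{u}(\bt_0,\cdot)$. There is no real obstacle here — the argument is routine — but the one point requiring a little care is the very first step: making explicit that the DtO iterate satisfies the perturbed Euler identity with defect equal to the residual. This is where the structural difference from OtD enters (the residual, not a tangent-space projection error, is what accumulates), and it is worth stating cleanly that this holds for *any* $\hat{u}(\bt_{k+1},\cdot)$, optimally chosen or not, since $r^{\rm DtO}_k$ is defined directly from the iterate. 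One should also note, as in Proposition~\ref{eq:OtD:APosterioriBound}, that if $\norm{\varepsilon_k}$ vanishes at some step the division step in passing from the squared to the unsquared norm is unnecessary here because the recursion is already linear in $\norm{\varepsilon_k}$, so no such caveat is needed.
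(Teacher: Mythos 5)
Your proposal is correct and follows essentially the same route as the paper's proof: write the DtO iterate as a perturbed explicit Euler step with defect $r^{\rm DtO}_k(\bt_{k+1},\cdot)$, subtract the exact relation defining $e_k$, apply the triangle inequality and the Lipschitz bound to get the one-step recursion $(1+C\delta t)\norm{\varepsilon_k}+\norm{e_k}+\norm{r^{\rm DtO}_k(\bt_{k+1},\cdot)}$, and unroll. Your added remarks (that the identity holds for any iterate regardless of optimization quality, and that no division-by-the-norm caveat is needed in the discrete setting) are accurate but do not change the argument.
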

\begin{proof}
   By definition of $e_k(\cdot)$ in \eqref{eq:definition_ek} and $r_k^{\text{DtO}}$ in \eqref{eq:Schemes:DtO:ResFun}, we have
   \begin{equation}
       \begin{aligned}
    &\|u(t_{k+1},\cdot) - \hat{u}(\bt_{k}, \cdot)\| \\
    = &\|u(t_k,\cdot) -\hat{u}(\bt_{k}, \cdot) + \delta t f(t_k, \cdot, u(t_k, \cdot))-\delta t f(t_k, \cdot, \hat{u}(\bt_{k}, \cdot))  -r_k^{\text{DtO}}(\bt_{k+1},\cdot)+e_{k}\|\\
    \leq & \|u(t_k,\cdot) -\hat{u}(\bt_{k}, \cdot) \| + \delta t\|f(t_k, \cdot, u(t_k, \cdot))- f(t_k, \cdot, \hat{u}(\bt_{k}, \cdot))\| + \|r_k^{\text{DtO}}(\bt_{k+1},\cdot)-e_{k}\|\\
     \leq & (1+C\delta t) \norm{u(t_k,\cdot)-\hat{u}(\bt_{k}, \cdot)}+\norm{r_k^{\text{DtO}}(\bt_{k+1},\cdot)}+\norm{e_k(\cdot)},
       \end{aligned}
   \end{equation}
   where we used the Lipschitzness of $f (t, \cdot, u(t_k, \cdot))$  and the triangle inequality. 
    A telescoping sum yields \eqref{eq:DtO:APostUnBoundBound}. 
\end{proof}

The above proposition shows that if $f$ is Lipschitz in $u$ then the error of the DtO solution is bounded by a sum of errors in the initial condition, the time discretization error of the PDE, and the norm of the residual over the spatial domain at each time step. In particular, the error of solutions of DtO schemes does not directly depend on the projections onto the tangent spaces of $\cM$ of the nonlinear parametrization, in contrast to OtD schemes.
When taking $\|\cdot\|=\|\cdot\|_M$, one can make  $\norm{\rdto_k(\bftheta, \cdot)}_M$ small through sufficient optimization in the optimization step of DtO and using expressive nonlinear parametrizations. Overall, we summarize that while the main emphasis in OtD schemes is on the expressiveness of the tangent space comprising component functions of $\nabla_{\bftheta} \hat{u}(\bftheta,\cdot)$ to ensure the projection of the right-hand side is accurate, in DtO schemes one focuses on the expressiveness of the nonlinear parametrization $\hat{u}(\bftheta,\cdot)$ itself as well as sufficient optimization so that one can drive the norm of the residual and therefore also the error low.

Proposition \ref{prop-DtO-error-Lipschitz} assumes $f$ is Lipschitz. When $f$ contains unbounded differential operators such as the Laplacian operator, DtO schemes with explicit time integrators may not necessarily lead to bounds as above. As an example, consider the case $f(t,\cdot, u(t,\cdot))=\Delta u(t, \cdot)+g(t,\cdot, u(t,\cdot))$ and  $\|\cdot\|_M = \|\cdot\|_{L^2(\Omega)}$ in the DtO scheme. Suppose the explicit Euler discretization is used, then selecting $\bftheta_{k + 1}$ that minimizes the residual norm $\|\rdto_k(\bftheta_{k + 1}, \cdot)\|_{L^2(\Omega)}$ means that the error of $\hat{u}(\bftheta_{k+1},\cdot)$ in the $L^2(\Omega)$ norm is low, under the assumption that $\Delta \hat{u} (\bftheta_{k},\cdot)$ and $\hat{u}(\bt_{k}, \cdot)$ have low errors, due to the following relation
\[\hat{u}(\bt_{k+1}, \cdot)=\hat{u}(\bt_{k}, \cdot) + r_{k}^{\text{DtO}}(\bftheta_{k + 1}, \cdot) + \delta t \Delta \hat{u} (\bftheta_{k},\cdot) + \delta t g(t_k, \cdot, \hat{u}(\bt_{k}, \cdot)).\]
However, the DtO optimization objective can only guarantee $\|\rdto_k(\bftheta_{k + 1}, \cdot)\|_{L^2(\Omega)}$ is small, but not the $H^2(\Omega)$ norm of it, so we do not know whether $\Delta \hat{u} (\bftheta_{k+1},\cdot)$ is accurate or not. Thus, in the next iteration, which takes the form
\[\hat{u}(\bt_{k+2}, \cdot)=\hat{u}(\bt_{k+1}, \cdot) + r_{k+1}^{\text{DtO}}(\bftheta_{k + 2}, \cdot)+ \delta t \Delta \hat{u} (\bftheta_{k+1},\cdot) + \delta t g(t_{k+1}, \cdot, \hat{u}(\bt_{k+1}, \cdot)),\]
minimizing the $L^2(\Omega)$ norm of the residual will be insufficient to keep the error of $\hat{u}(\bt_{k+2}, \cdot)$ low.

To obtain bounded operators at each time step, one can use different norms in the loss; however, these can be numerically challenging to evaluate. Instead, we use implicit time integration in DtO schemes to obtain the  following proposition.
 
\begin{proposition}
\label{prop-DtO-implicit-error}
 Consider the time-dependent PDE \eqref{eq:Prelim:PDE} with a homogeneous Dirichlet boundary condition, where the right-hand side of the PDE has the form
\begin{align}
    f(t,\cdot, u(t,\cdot))=\Delta u(t, \cdot)+g(t,\cdot, u(t,\cdot))\, .
\end{align}
We assume there exists a constant $C > 0$ such that 
\begin{equation}\label{eq:Lipschitzness_of_g}
        \norm{g(t,\cdot,v)-g(t,\cdot,w)}_{L^2(\Omega)}\leq C\norm{v-w}_{L^2(\Omega)}\,,
\end{equation}
for all $v, w \in \cU$ and all times $t \in [0,T]$ and we assume that the solution space $\cU$ embeds into $H^2(\Omega)$. Consider the implicit Euler discretization of \eqref{eq:Prelim:PDE} with time-step size $\delta t$ and assume that $1+(\lambda^*-C)\delta t > 0$, where $\lambda^*>0$ is the smallest non-zero eigenvalue of $-\Delta$ with Dirichlet  boundary conditions. Let $e_k(\cdot)$ be the time-integration error such that
\begin{equation}\label{eq:definition_ek_laplacian}
e_k(\cdot)=u(t_{k + 1}, \cdot) - u(t_k, \cdot) -\delta t f(t_{k+1}, \cdot, u(t_{k+1}, \cdot))     ,\ \quad k \in \mathbb{N}\,.    
\end{equation}
Using $\|\cdot\|_M = \|\cdot\|_{L^2(\Omega)}$ in the DtO schemes, 
the error of the DtO solution $\hat{u}(\bt_k, \cdot)$ can be bounded as
\begin{equation*}
        \begin{aligned}
        \|u(t_k,\cdot)-&\hat{u}(\bt_{k}, \cdot)\|_{L^2(\Omega)} \leq \left(\frac{1}{1+(\lambda^*-C)\delta t}\right)^k \norm{u(0,\cdot)-\hat{u}(\bt_{0}, \cdot)}_{L^2(\Omega)} \\
        +& \sum_{i=0}^{k-1} \left(\frac{1}{1+(\lambda^*-C)\delta t}\right)^{k-l}  \left(\norm{r_i^{\rm {DtO}}(\bt_{i+1},\cdot)}_{L^2(\Omega)} + \norm{e_i(\cdot)}_{L^2(\Omega)}\right).
    \end{aligned}   
    \end{equation*}
\end{proposition}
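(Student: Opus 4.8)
The plan is to mirror the proof of Proposition~\ref{prop-DtO-error-Lipschitz}, but to exploit the implicit treatment of the Laplacian so that the unbounded operator is absorbed into a contraction factor rather than contributing to error growth. First I would write the implicit-Euler update satisfied by the true solution, namely $u(t_{k+1},\cdot) = u(t_k,\cdot) + \delta t\, f(t_{k+1},\cdot,u(t_{k+1},\cdot)) + e_k(\cdot)$, which is just the definition \eqref{eq:definition_ek_laplacian} of $e_k$. On the discrete side, a first-order optimal point $\bt_{k+1}$ of the DtO regression \eqref{eq:Schemes:DtO:RegProb} with $\zeta = 0$ satisfies, by the definition \eqref{eq:Schemes:DtO:ResFun} of the residual, $\hat{u}(\bt_{k+1},\cdot) = \hat{u}(\bt_k,\cdot) + \delta t\, f(t_{k+1},\cdot,\hat{u}(\bt_{k+1},\cdot)) + \rdto_k(\bt_{k+1},\cdot)$. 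Subtracting, writing $\varepsilon_k(\cdot) := u(t_k,\cdot) - \hat{u}(\bt_k,\cdot)$, and using $f = \Delta(\cdot) + g$ gives the error recursion
\begin{equation*}
\varepsilon_{k+1} - \delta t\, \Delta \varepsilon_{k+1} = \varepsilon_k + \delta t\big(g(t_{k+1},\cdot,u(t_{k+1},\cdot)) - g(t_{k+1},\cdot,\hat{u}(\bt_{k+1},\cdot))\big) + e_k - \rdto_k(\bt_{k+1},\cdot)\,.
\end{equation*}

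Next I would take the $L^2(\Omega)$ inner product of both sides with $\varepsilon_{k+1}$. Since both $u(t_{k+1},\cdot)$ and $\hat{u}(\bt_{k+1},\cdot)$ satisfy the homogeneous Dirichlet condition (the parametrization enforces it) and lie in $H^2(\Omega)$, we have $\varepsilon_{k+1} \in H_0^1(\Omega)\cap H^2(\Omega)$, so integration by parts gives $-\langle \Delta\varepsilon_{k+1},\varepsilon_{k+1}\rangle_{L^2(\Omega)} = \norm{\nabla\varepsilon_{k+1}}_{L^2(\Omega)}^2 \geq \lambda^*\norm{\varepsilon_{k+1}}_{L^2(\Omega)}^2$ by the variational characterization of $\lambda^*$. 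Hence the left-hand side is bounded below by $(1+\lambda^*\delta t)\norm{\varepsilon_{k+1}}_{L^2(\Omega)}^2$. For the right-hand side, Cauchy--Schwarz together with the Lipschitz bound \eqref{eq:Lipschitzness_of_g} on $g$ yields the upper bound $\big(\norm{\varepsilon_k}_{L^2(\Omega)} + \norm{e_k}_{L^2(\Omega)} + \norm{\rdto_k(\bt_{k+1},\cdot)}_{L^2(\Omega)}\big)\norm{\varepsilon_{k+1}}_{L^2(\Omega)} + C\delta t\,\norm{\varepsilon_{k+1}}_{L^2(\Omega)}^2$. Dividing by $\norm{\varepsilon_{k+1}}_{L^2(\Omega)}$ (the case $\varepsilon_{k+1}=0$ being trivial) and collecting the $\norm{\varepsilon_{k+1}}_{L^2(\Omega)}$ terms gives
\begin{equation*}
\big(1 + (\lambda^* - C)\delta t\big)\norm{\varepsilon_{k+1}}_{L^2(\Omega)} \leq \norm{\varepsilon_k}_{L^2(\Omega)} + \norm{e_k(\cdot)}_{L^2(\Omega)} + \norm{\rdto_k(\bt_{k+1},\cdot)}_{L^2(\Omega)}\,.
\end{equation*}

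Under the standing hypothesis $1 + (\lambda^* - C)\delta t > 0$, dividing by this positive factor turns the estimate into a one-step recursion with contraction factor $q := 1/(1 + (\lambda^* - C)\delta t)$. Iterating from $k=0$ by the same geometric/telescoping argument that ends the proof of Proposition~\ref{prop-DtO-error-Lipschitz} produces the claimed bound, with $q^{k}$ multiplying the initial error and $q^{k-i}$ multiplying the $i$-th time-integration and residual contributions. The only step needing care is the inner-product estimate against the implicit term: one must verify $\varepsilon_{k+1}\in H_0^1(\Omega)$ so the spectral bound on $-\Delta$ applies, and observe that the positivity assumption on $\delta t$ is precisely what makes the coefficient $1+(\lambda^*-C)\delta t$ invertible with $q$ acting as a genuine (possibly larger than one when $C>\lambda^*$, but still finite) amplification constant; the remaining manipulations are Cauchy--Schwarz, the Lipschitz property of $g$, and a routine summation.
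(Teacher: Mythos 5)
Your proposal is correct and reaches exactly the paper's one-step recursion
\begin{equation*}
\bigl(1+(\lambda^*-C)\delta t\bigr)\,\norm{u(t_{k+1},\cdot)-\hat{u}(\bt_{k+1},\cdot)}_{L^2(\Omega)}\leq \norm{u(t_k,\cdot)-\hat{u}(\bt_k,\cdot)}_{L^2(\Omega)}+\norm{e_k(\cdot)}_{L^2(\Omega)}+\norm{r_k^{\rm DtO}(\bt_{k+1},\cdot)}_{L^2(\Omega)}\,,
\end{equation*}
starting from the identical error identity $(1-\delta t\Delta)\varepsilon_{k+1}=\varepsilon_k+\delta t\,(g(t_{k+1},\cdot,u)-g(t_{k+1},\cdot,\hat u))+e_k-r_k^{\rm DtO}$. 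The one place where you diverge from the paper is the mechanism for extracting the damping from the implicitly treated Laplacian: the paper applies the resolvent $(I-\delta t\Delta)^{-1}$ to both sides and invokes its $L^2\to L^2$ operator-norm bound $1/(1+\lambda^*\delta t)$, then moves the Lipschitz term in $g$ to the left to produce the factor $1/(1+(\lambda^*-C)\delta t)$; you instead test the identity against $\varepsilon_{k+1}$, integrate by parts, and use the Poincar\'e/spectral bound $\norm{\nabla \varepsilon_{k+1}}_{L^2(\Omega)}^2\geq \lambda^*\norm{\varepsilon_{k+1}}_{L^2(\Omega)}^2$. Your energy-estimate route is slightly more self-contained, since it does not require asserting invertibility of $I-\delta t\Delta$ on $L^2(\Omega)$ and a bound on its inverse, only that $\varepsilon_{k+1}\in H_0^1(\Omega)\cap H^2(\Omega)$, which you correctly flag and which follows from the homogeneous Dirichlet condition and the embedding $\cU\hookrightarrow H^2(\Omega)$; the paper's resolvent route is marginally more direct once that operator bound is granted. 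Both arguments then conclude with the same geometric iteration, and you correctly identify the exponent in the summand as $k-i$ (the $k-l$ in the stated bound is a typo).
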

\begin{proof}
  By definition of $e_k(\cdot)$ in \eqref{eq:definition_ek_laplacian} and $r_k^{\text{DtO}}$ in \eqref{eq:Schemes:DtO:ResFun}, we have:
    \begin{multline}
         (1-\delta t\Delta)\big(u(t_{k+1},\cdot)-\hat{u}(\bt_{k+1}, \cdot)\big)=(u(t_k,\cdot) -\hat{u}(\bt_{k}, \cdot)) +\\ \delta t \big(g(t_{k+1}, \cdot, u(t_{k+1}, \cdot))- g(t_{k+1}, \cdot, \hat{u}(\bt_{k+1}, \cdot))\big)  -r_k^{\text{DtO}}(\bt_{k+1},\cdot)+ e_{k}(\cdot) \,.
    \end{multline}
    Multiplying $(I-\delta t\Delta)^{-1}$ on both sides and using the triangle inequality, we obtain
    \begin{equation}
    \begin{aligned}
    &\norm{u(t_{k+1},\cdot)-\hat{u}(\bt_{k+1}, \cdot)}_{L^2(\Omega)}\\
        \leq &\frac{1}{1+\lambda^*\delta t}\left(\norm{(u(t_k,\cdot) -\hat{u}(\bt_{k}, \cdot))}_{L^2(\Omega)}+\norm{r_k^{\text{DtO}}(\bt_{k+1},\cdot)}_{L^2(\Omega)}+ \norm{e_{k}(\cdot)}_{L^2(\Omega)}\right)\\ &+\frac{1}{1+\lambda^*\delta t}\left(\delta t\norm{  g(t_{k+1}, \cdot, u(t_{k+1}, \cdot))- g(t_{k+1}, \cdot, \hat{u}(\bt_{k+1}, \cdot)) }_{L^2(\Omega)}\right) \,.
        \end{aligned}
    \end{equation}
    Using the Lipschitzness of $g (t, \cdot, u(t_k, \cdot))$ in~\eqref{eq:Lipschitzness_of_g} leads to  
    \begin{equation}
    \begin{aligned}    
    &\norm{u(t_{k+1},\cdot)-\hat{u}(\bt_{k+1}, \cdot)}_{L^2(\Omega)} \\
    \leq &\frac{1}{1+(\lambda^*-C)\delta t} \left(\norm{u(t_k,\cdot)-\hat{u}(\bt_{k}, \cdot)}_{L^2(\Omega)}+\|r_k^{\text{DtO}}(\bt_{k+1},\cdot)\|_{L^2(\Omega)}+ \norm{e_k(\cdot)}_{L^2(\Omega)}\right).
    \end{aligned}
    \end{equation}
    Iterating the above inequality leads to the final result.
\end{proof}

The error of solutions of DtO schemes is bounded by a combination of errors in the initial condition, the time discretization error of the PDE, and the norm of the residual at each time step. If one can optimize the residual sufficiently, then the DtO solution error will reduce accordingly. The above analysis also applies to implicit-explicit time integration, namely explicit time integration on $g$ and implicit time integration on $\Delta u$. This could have potential computational advantages because implicit-explicit time integration avoids having to differentiate the nonlinear function $g$ during the optimization of parameter $\bftheta_{k + 1}$ corresponding to the DtO solution.

\subsection{Tangent space collapse: OtD versus DtO schemes} 
In Section~\ref{sec:OtD:TangentSpace}, we discussed that the dynamics in OtD schemes can be affected by the tangent collapse phenomenon. 
For DtO schemes, the situation is different. 
There is no explicit dependency of the a posteriori bounds of DtO solutions on the tangent spaces and thus DtO schemes can be seen as beneficial with respect to the tangent space collapse phenomenon.  
In each time step, an optimization problem is solved to determine the subsequent parameter, rather than the time derivative of the parameter in time. Consequently, in principle, an optimization algorithm can lead to intermediate parameter trajectories that traverse the entire parameter space, and the expressiveness of the nonlinear parametrization remains unaffected by linear dependencies of component functions of the gradient at the current solution. An optimization that explores the whole parameter space, however, can be computationally expensive. In this sense, DtO schemes can trade computational cost for accuracy. 

It is essential to be careful in the selection of optimization algorithms for the residual minimization in DtO schemes because the dependency of DtO schemes on tangent spaces can enter implicitly via the choice of the optimization algorithm that is used to minimize the DtO residual norm. The used optimization methods  should not crucially depend on the tangent space at previous solutions, otherwise similar degeneracy issue caused by tangent space collapse as in OtD schemes will also apply to DtO schemes. In fact, as we will show in Section~\ref{sec:Discussion of OtD and DtO schemes}, DtO schemes can be viewed as first-order approximation of OtD schemes if the Gauss-Newton method is used for optimization. In such cases, the update process relies on the tangent space and thus can still be susceptible to tangent collapse.

\subsection{Stability of DtO schemes}
\label{sec:stability analysis of DtO}
The error analysis in the preceding section depends on a bound of the residual and thus relies on achieving an adequate optimization of the residual. The optimization is in general a delicate task as the underlying optimization problem is typically non-convex. 

In this section, we show that in fact, the stationary points of the DtO residual objective can still be well behaved. More precisely, we show that under mild assumptions, these stationary point solutions are stable, even though we do not have guarantees on their accuracy.

\begin{proposition}
\label{prop-DtO-stability}
Let the right-hand side $f$ of the PDE \eqref{eq:Prelim:PDE} satisfy
\begin{align}\label{eq:f_affine_bound}
\norm{f(t, \cdot, v)}_M\leq C\norm{v}_M + C_0\,,    
\end{align}
for some constants $C, C_0 > 0$ and all $v \in \cU$ and times $t \in [0,T]$. 
Let the nonlinear parametrization satisfy assumption \eqref{eq:Asm:UInT}. For $k \in \mathbb{N}$, let $\hat{u}(\bt_k, \cdot)$ satisfy the stationary point condition \eqref{eq:Schemes:DtO:FirstOrderOpti:zeta_1} corresponding to DtO schemes with explicit Euler time discretization and time-step size $\delta t > 0$. Then, for any $\epsilon$ satisfying $1-\epsilon\delta t>0$, it holds that
 \begin{equation}
        \norm{\hat{u}(\bt_{k}, \cdot)}_M^2\leq \left(\frac{1+2C^2\delta t/\epsilon}{1-\epsilon\delta t}  \right)^k  \norm{\hat{u}(\bt_0, \cdot)}_M^2+  \left(\left(\frac{1+2C^2\delta t/\epsilon}{1-\epsilon\delta t}\right)^k -1\right)\frac{2C_0^2}{2C^2+\epsilon^2}\,.
\end{equation}
\end{proposition}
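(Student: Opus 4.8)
The plan is to mimic the stability proof of Proposition~\ref{prop:OtDAnalysis:NormStab}, but working with the discrete-in-time stationarity condition \eqref{eq:Schemes:DtO:FirstOrderOpti:zeta_1} instead of the continuous OtD equation. First I would write the explicit Euler DtO residual at the stationary point as $r_k^{\text{DtO}}(\bt_{k+1},\cdot) = \hat{u}(\bt_{k+1},\cdot) - \hat{u}(\bt_k,\cdot) - \delta t\, f(t_k,\cdot,\hat{u}(\bt_k,\cdot))$, so that $\hat{u}(\bt_{k+1},\cdot) = \hat{u}(\bt_k,\cdot) + \delta t\, f(t_k,\cdot,\hat{u}(\bt_k,\cdot)) + r_k^{\text{DtO}}(\bt_{k+1},\cdot)$. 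The key structural fact is that \eqref{eq:Schemes:DtO:FirstOrderOpti:zeta_1} says $r_k^{\text{DtO}}(\bt_{k+1},\cdot) \perp T_{\hat{u}(\bt_{k+1},\cdot)}\cM$ in the $\langle\cdot,\cdot\rangle_M$ inner product, and by assumption \eqref{eq:Asm:UInT} we have $\hat{u}(\bt_{k+1},\cdot) \in T_{\hat{u}(\bt_{k+1},\cdot)}\cM$, hence $\langle \hat{u}(\bt_{k+1},\cdot), r_k^{\text{DtO}}(\bt_{k+1},\cdot)\rangle_M = 0$.

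Next I would expand $\norm{\hat{u}(\bt_{k+1},\cdot)}_M^2$ by taking the $M$-inner product of $\hat{u}(\bt_{k+1},\cdot)$ with the identity $\hat{u}(\bt_{k+1},\cdot) = \hat{u}(\bt_k,\cdot) + \delta t\, f(t_k,\cdot,\hat{u}(\bt_k,\cdot)) + r_k^{\text{DtO}}(\bt_{k+1},\cdot)$. The residual term drops out by the orthogonality just established, leaving $\norm{\hat{u}(\bt_{k+1},\cdot)}_M^2 = \langle \hat{u}(\bt_{k+1},\cdot), \hat{u}(\bt_k,\cdot)\rangle_M + \delta t\,\langle \hat{u}(\bt_{k+1},\cdot), f(t_k,\cdot,\hat{u}(\bt_k,\cdot))\rangle_M$. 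Now I would bound each term using Cauchy--Schwarz followed by Young's inequality with the free parameter $\epsilon$: the first term gives something like $\tfrac12\norm{\hat{u}(\bt_{k+1},\cdot)}_M^2 + \tfrac12\norm{\hat{u}(\bt_k,\cdot)}_M^2$ (or a version weighted by $\epsilon\delta t$ to produce the $1-\epsilon\delta t$ denominator), and the second term, after inserting the affine bound \eqref{eq:f_affine_bound} on $\norm{f}_M \le C\norm{\hat{u}(\bt_k,\cdot)}_M + C_0$, is handled by Young's inequality to convert the cross terms $\delta t\,\norm{\hat{u}(\bt_{k+1},\cdot)}_M(C\norm{\hat{u}(\bt_k,\cdot)}_M + C_0)$ into $\norm{\hat{u}(\bt_{k+1},\cdot)}_M^2$ times a small coefficient plus $\norm{\hat{u}(\bt_k,\cdot)}_M^2$ and $C_0^2$ contributions. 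Collecting the $\norm{\hat{u}(\bt_{k+1},\cdot)}_M^2$ terms on the left, dividing through by the resulting positive coefficient $1-\epsilon\delta t$, and reading off the per-step amplification factor $\tfrac{1+2C^2\delta t/\epsilon}{1-\epsilon\delta t}$ and the additive constant $\tfrac{2C_0^2}{2C^2+\epsilon^2}$-type term should give a one-step recursion $\norm{\hat{u}(\bt_{k+1},\cdot)}_M^2 \le \rho\norm{\hat{u}(\bt_k,\cdot)}_M^2 + b$. Iterating this geometric recursion yields the stated closed form.

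\textbf{Main obstacle.} The delicate part is the bookkeeping of the Young's-inequality splittings so that the constants come out exactly as $\tfrac{1+2C^2\delta t/\epsilon}{1-\epsilon\delta t}$ and $\tfrac{2C_0^2}{2C^2+\epsilon^2}$: one has to choose the Young's parameters (there will be at least two, one for the $\langle \hat{u}(\bt_{k+1}),\hat{u}(\bt_k)\rangle_M$ pairing and one or two for the $f$-pairing) so that they are consistent with a single free parameter $\epsilon$ and so that the coefficient multiplying $\norm{\hat{u}(\bt_{k+1},\cdot)}_M^2$ on the right is small enough (in fact it should combine to leave exactly $1-\epsilon\delta t$ on the left after moving terms over). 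Getting $C_0$ to enter only through the additive term and not inflate the amplification factor, and matching the precise denominator $2C^2+\epsilon^2$, requires care but is routine once the right Young's weights are picked. Everything else---the orthogonality via \eqref{eq:Asm:UInT}, the expansion of the square, and the final iteration of a scalar affine recursion---is straightforward.
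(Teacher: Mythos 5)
Your proposal is correct and follows essentially the same route as the paper: testing the stationarity condition against $\hat{u}(\bt_{k+1},\cdot)$ itself (justified by assumption \eqref{eq:Asm:UInT}), which is exactly your orthogonality observation $\langle \hat{u}(\bt_{k+1},\cdot), r_k^{\text{DtO}}(\bt_{k+1},\cdot)\rangle_M = 0$, then Cauchy--Schwarz and Young's inequality with the single parameter $\epsilon$ to obtain the one-step affine recursion, and iterating. The constant bookkeeping you flag works out exactly as you anticipate, with $\rho - 1 = \delta t(2C^2+\epsilon^2)/(\epsilon(1-\epsilon\delta t))$ producing the $2C_0^2/(2C^2+\epsilon^2)$ term in the geometric sum.
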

\begin{proof}
    Stationary points satisfy condition \eqref{eq:Schemes:DtO:FirstOrderOpti:zeta_1}, which we write as
    \begin{equation}
        \left\langle \nabla_\bt \hat{u}(\bt_{k+1}, \cdot), \hat{u}(\bt_{k+1}, \cdot)- \hat{u}(\bt_{k}, \cdot)  \right\rangle_M  = \delta t \left\langle \nabla_\bt \hat{u}(\bt_{k+1}, \cdot),  f(t_k, \cdot, \hat{u}(\bt_{k}, \cdot)) \right\rangle_M. \label{eq:stationary}
    \end{equation}
    By assumption \eqref{eq:Asm:UInT}, we have $\hat{u}(\bt, \cdot) \in T_{\hat{u}(\bftheta,\cdot)}\cM$, so we can represent
    $\hat{u}(\bt_{k+1}, \cdot)$ as a linear combination of functions that span the tangent space $T_{\hat{u}(\bftheta,\cdot)}\cM$, e.g., the component functions of $\nabla_{\bt} \hat{u}(\bt_{k+1}, \cdot)$. Therefore,  we obtain via multiplication with coefficients and summation from \eqref{eq:stationary} that the following equation must hold at a stationary point
    \[
        \left\langle\hat{u}(\bt_{k+1}, \cdot), \hat{u}(\bt_{k+1}, \cdot)- \hat{u}(\bt_{k}, \cdot)  \right\rangle_M  = \delta t \left\langle\hat{u}(\bt_{k+1}, \cdot),  f(t_k, \cdot, \hat{u}(\bt_{k}, \cdot)) \right\rangle_M.
    \]
    Using the Cauchy-Schwarz inequality and the inequality of arithmetic and geometric means, the left-hand side can be lower bounded as
    \[\left\langle\hat{u}(\bt_{k+1}, \cdot), \hat{u}(\bt_{k+1}, \cdot)- \hat{u}(\bt_{k}, \cdot)  \right\rangle_M \geq \frac{1}{2}(\|\hat{u}(\bt_{k+1}, \cdot)\|_M^2 - \|\hat{u}(\bt_{k}, \cdot)\|_M^2).\] 
    Furthermore, using the Cauchy-Schwarz inequality and  bound~\eqref{eq:f_affine_bound}, we obtain
    \begin{equation}
    \label{eqn-stability-DtO-lipschitz-1}
    \begin{aligned}
      \delta t  \left\langle\hat{u}(\bt_{k+1}, \cdot),  f(t_k, \cdot, \hat{u}(\bt_{k}, \cdot)) \right\rangle_M
        \leq &\frac{1}{2}\delta t(\epsilon \norm{\hat{u}(\bt_{k+1})}_M^2+ \frac{\norm{f(t_k, \cdot, \hat{u}(\bt_{k}, \cdot))}_M^2}{\epsilon})\\
        \leq &\frac{1}{2}\delta t(\epsilon \norm{\hat{u}(\bt_{k+1})}_M^2+ \frac{2C^2\norm{\hat{u}(\bt_{k}, \cdot)}_M^2}{\epsilon}+\frac{2C_0^2}{\epsilon}),
        \end{aligned}
    \end{equation} 
    for any $\epsilon > 0$.
    Rearranging the terms in~\eqref{eqn-stability-DtO-lipschitz-1} and leveraging the assumption that  $1-\epsilon\delta t > 0$, we arrive at
    \begin{align}
    \label{eqn-stability-DtO-lipschitz}
        \norm{\hat{u}(\bt_{k+1}, \cdot)}_M^2\leq \frac{1}{1-\epsilon\delta t}  \left(1+\frac{2C^2\delta t}{\epsilon} \right)\norm{\hat{u}(\bt_{k}, \cdot)}_M^2+\frac{1}{1-\epsilon\delta t}\frac{2C_0^2\delta t}{\epsilon}.
    \end{align}
    Iterating the inequality~\eqref{eqn-stability-DtO-lipschitz} yields the final result.
\end{proof}

We can achieve a similar stability bound when $f$ is unbounded, for example when
$f(t, \cdot, u(t,\cdot))=\Delta u+ g(t, \cdot, u(t,\cdot))$ with $\norm{g(t,\cdot, v)}_{L_2(\Omega)}\leq C\norm{v}_{L^2(\Omega)} + C_0$. However, as discussed in Section \ref{sec: Analysis of DtO}, it is more appropriate to employ implicit time integration when dealing with unbounded $f$. We specifically delve into the scenario where we employ implicit time integration for the unbounded term $\Delta u$ while employing explicit time integration for $g$. More precisely, we obtain the following residue function,
\begin{equation}
\label{eqn-residue-explicit-implicit}
    \rdto_k(\bftheta, \cdot) = \hat{u}(\bftheta, \cdot) - \big(\hat{u}(\bftheta_k,\cdot)  + \delta t \Delta \hat{u}(\bftheta, \cdot) +\delta t g(t_{k},\cdot, \hat{u}( \bftheta_k,\cdot))\big) ,
\end{equation}
and the corresponding stationary condition for minimizing the norm $\|\rdto_k(\bftheta, \cdot)\|_{L^2(\Omega)}^2$
\begin{equation}
\label{eqn-stationary-explicit-implicit}
        \left\langle \nabla_\bt \hat{u}(\bt_{k+1}, \cdot), \hat{u}(\bt_{k+1}, \cdot)- \hat{u}(\bt_{k}, \cdot) -\delta t \Delta \hat{u}(\bt_{k+1},\cdot) - \delta t g(t_k, \cdot, \hat{u}(\bt_{k}, \cdot)  \right\rangle_{L^2(\Omega)}  = 0.
    \end{equation}
We have the following stability result for these stationary points.
\begin{proposition}
Let $f(t, \cdot, v)=\Delta u+ g(t, \cdot, v)$ where it holds $\norm{g(t,\cdot, v)}_{L^2(\Omega)}\leq C\norm{v}_{L^2(\Omega)} + C_0$, for some constants $C, C_0 > 0$ and all $v \in \cU$ and times $t \in [0,T]$. 
Let assumption \eqref{eq:Asm:UInT} hold. Consider the solution $\hat{u}(\bt_k, \cdot)$ obtained by DtO which satisfies the stationary point condition \eqref{eqn-stationary-explicit-implicit}. Moreover $\hat{u}(\bt(t), \cdot)$ satisfies homogeneous Dirichlet boundary conditions.  
Then, for any $\epsilon$ satisfying $1-\epsilon\delta t>0$, it holds that
 \begin{equation*}
        \norm{\hat{u}(\bt_{k}, \cdot)}_{L^2(\Omega)}^2\leq \left(\frac{1+2C^2\delta t/\epsilon}{1-\epsilon\delta t}  \right)^k  \norm{\hat{u}(\bt_0, \cdot)}_{L^2(\Omega)}^2+  \left((\frac{1+2C^2\delta t/\epsilon}{1-\epsilon\delta t})^k -1\right)\frac{2C_0^2}{2C^2+\epsilon^2}\,.
\end{equation*}
\end{proposition}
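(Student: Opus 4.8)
The plan is to reuse, almost verbatim, the proof of Proposition~\ref{prop-DtO-stability}, the only genuinely new element being that the implicitly-discretized Laplacian term enters with a favorable sign and can simply be discarded. Concretely, I would start from the stationary point condition \eqref{eqn-stationary-explicit-implicit} and invoke assumption \eqref{eq:Asm:UInT}: since $\hat{u}(\bt_{k+1},\cdot)$ lies in the tangent space $T_{\hat{u}(\bt_{k+1},\cdot)}\cM$, which is spanned by the component functions of $\nabla_\bt\hat{u}(\bt_{k+1},\cdot)$, a suitable linear combination of the component equations in \eqref{eqn-stationary-explicit-implicit} yields
\[
\ip{\hat{u}(\bt_{k+1},\cdot),\hat{u}(\bt_{k+1},\cdot)-\hat{u}(\bt_k,\cdot)}_{L^2(\Omega)}=\delta t\,\ip{\hat{u}(\bt_{k+1},\cdot),\Delta\hat{u}(\bt_{k+1},\cdot)}_{L^2(\Omega)}+\delta t\,\ip{\hat{u}(\bt_{k+1},\cdot),g(t_k,\cdot,\hat{u}(\bt_k,\cdot))}_{L^2(\Omega)}.
\]

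The next step is to drop the Laplacian contribution: by the spectral property of $-\Delta$ under homogeneous Dirichlet boundary conditions used in the proof of Proposition~\ref{eq:Prop:OtD:Laplace}, $\ip{\hat{u}(\bt_{k+1},\cdot),\Delta\hat{u}(\bt_{k+1},\cdot)}_{L^2(\Omega)}\leq-\lambda^*\norm{\hat{u}(\bt_{k+1},\cdot)}_{L^2(\Omega)}^2\leq 0$, so the right-hand side is bounded above by $\delta t\,\ip{\hat{u}(\bt_{k+1},\cdot),g(t_k,\cdot,\hat{u}(\bt_k,\cdot))}_{L^2(\Omega)}$ alone (retaining the $-\lambda^*$ term would give a sharper constant, but discarding it already produces the stated bound). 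From this point the argument is identical to that of Proposition~\ref{prop-DtO-stability}: lower-bound the left-hand side via Cauchy--Schwarz and the inequality of arithmetic and geometric means by $\tfrac12\bigl(\norm{\hat{u}(\bt_{k+1},\cdot)}_{L^2(\Omega)}^2-\norm{\hat{u}(\bt_k,\cdot)}_{L^2(\Omega)}^2\bigr)$; upper-bound the $g$-term, for any $\epsilon>0$, by $\tfrac12\delta t\bigl(\epsilon\norm{\hat{u}(\bt_{k+1},\cdot)}_{L^2(\Omega)}^2+\epsilon^{-1}\norm{g(t_k,\cdot,\hat{u}(\bt_k,\cdot))}_{L^2(\Omega)}^2\bigr)$ and then use $\norm{g(t_k,\cdot,\hat{u}(\bt_k,\cdot))}_{L^2(\Omega)}^2\leq 2C^2\norm{\hat{u}(\bt_k,\cdot)}_{L^2(\Omega)}^2+2C_0^2$.

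Rearranging and using the hypothesis $1-\epsilon\delta t>0$ produces the one-step recursion
\[
\norm{\hat{u}(\bt_{k+1},\cdot)}_{L^2(\Omega)}^2\leq\frac{1+2C^2\delta t/\epsilon}{1-\epsilon\delta t}\,\norm{\hat{u}(\bt_k,\cdot)}_{L^2(\Omega)}^2+\frac{2C_0^2\delta t/\epsilon}{1-\epsilon\delta t},
\]
which I would iterate from $k=0$; writing $r=\tfrac{1+2C^2\delta t/\epsilon}{1-\epsilon\delta t}$, the identity $r-1=\tfrac{\delta t(2C^2+\epsilon^2)/\epsilon}{1-\epsilon\delta t}$ collapses the resulting geometric sum into the claimed closed form with additive constant $\tfrac{2C_0^2}{2C^2+\epsilon^2}$. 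I do not expect a real obstacle: the only delicate point is the linear-combination step that converts the gradient-tested condition \eqref{eqn-stationary-explicit-implicit} into one tested against $\hat{u}(\bt_{k+1},\cdot)$ itself, which is precisely where assumption \eqref{eq:Asm:UInT} is indispensable, together with checking that the Laplacian appears implicitly (at step $k+1$) rather than explicitly — an explicit $\delta t\,\Delta\hat{u}(\bt_k,\cdot)$ term would have to be bounded and, lacking control of $\norm{\Delta\hat{u}(\bt_k,\cdot)}_{L^2(\Omega)}$, would compromise the stability estimate, as already noted in Section~\ref{sec: Analysis of DtO}.
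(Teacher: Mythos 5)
Your proposal is correct and follows essentially the same route as the paper's proof: use assumption \eqref{eq:Asm:UInT} to test the stationary condition \eqref{eqn-stationary-explicit-implicit} against $\hat{u}(\bt_{k+1},\cdot)$ itself, discard the implicitly-treated Laplacian term by its favorable sign, and then repeat the Cauchy--Schwarz/AM--GM recursion of Proposition~\ref{prop-DtO-stability}. The only cosmetic difference is that you drop the Laplacian via the spectral bound $\ip{v,\Delta v}_{L^2(\Omega)}\leq-\lambda^*\norm{v}_{L^2(\Omega)}^2\leq 0$ while the paper uses integration by parts to write $\ip{v,-\delta t\,\Delta v}_{L^2(\Omega)}=\delta t\norm{\nabla v}_{L^2(\Omega)}^2\geq 0$; these are the same fact for functions satisfying the homogeneous Dirichlet condition, so the arguments coincide.
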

\begin{proof}
    The proof is similar to the proof of Proposition \ref{prop-DtO-stability}. We use Assumption \eqref{eq:Asm:UInT} to replace $\nabla_{\bt} \hat{u}(\bt_{k+1}, \cdot)$ in the \eqref{eqn-stationary-explicit-implicit} by $\hat{u}(\bt_{k+1}, \cdot)$. Then we use the same Cauchy-Schwarz inequality as in Proposition~\ref{prop-DtO-stability} and the additional fact (using the homogeneous boundary condition and integration by parts) that \[\left\langle  \hat{u}(\bt_{k+1}, \cdot), -\delta t \Delta \hat{u}(\bt_{k+1},\cdot)\right\rangle_{L^2(\Omega)} = \delta t \|\nabla \hat{u}(\bt_{k+1}, \cdot)\|_{L^2(\Omega)}^2 \geq 0,  \]
    which will lead to the same estimates as in \eqref{eqn-stability-DtO-lipschitz-1} and  \eqref{eqn-stability-DtO-lipschitz}. Iterating the inequality completes the proof.
\end{proof}

The above proposition implies that if we apply an implicit time integrator on the unbounded Laplacian part while using explicit integrators on other bounded parts, the DtO solution remains stable.

\section{Further discussion on OtD and DtO schemes}
\label{sec:Discussion of OtD and DtO schemes}
We show in Section~\ref{sec-Connections and Differences Between OtD and DtO} that solutions corresponding to OtD and DtO schemes coincide under a specific choice of optimization method and time integrator. 
However, in more general settings, OtD and DtO schemes can behave differently. Section~\ref{sec:OtDGradientFlows} discusses the special case of applying OtD schemes to gradient flows and draws connections to optimization and sampling methods that can be interpreted as OtD schemes.

\subsection{OtD as a first-order approximation of DtO schemes}
\label{sec-Connections and Differences Between OtD and DtO}

The following proposition shows that OtD and DtO schemes follow the same dynamics in the specific case of explicit Euler time intergration and one-step Gauss-Newton optimization. The one-step Gauss-Newton optimization serves the role of providing a first-order approximation of the DtO dynamics.
\begin{proposition}\label{thm:OtD_DtO_OnestepGaussNewton}
Consider the DtO scheme with explicit Euler time integration and time-step size $\delta t > 0$. 
Let $\bftheta_1, \bftheta_2, \dots, \bftheta_K$ be a parameter trajectory that is obtained by applying Gauss-Newton iterations \eqref{eq:Compute:DtO:GaussNewton} of a single step, $L = 1$, to \eqref{eq:Schemes:DtO:RegProb}. 
Then the parameters satisfy the first-order optimality condition 
\eqref{eq:OtD_explicit} 
corresponding to the OtD scheme based on explicit Euler integration and time-step size $\delta t$.
\end{proposition}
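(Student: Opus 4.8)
The plan is to unwind the one-step Gauss--Newton update \eqref{eq:Compute:DtO:GaussNewton} applied to the explicit-Euler DtO residual \eqref{eq:Schemes:DtO:ResFun} and show that it reduces algebraically to the OtD normal equations \eqref{eq:OtD_explicit}. First I would specialize \eqref{eq:Schemes:DtO:ResFun} to $\zeta = 1$, which gives $\rdto_k(\bt, \cdot) = \hat{u}(\bt, \cdot) - \hat{u}(\bt_k, \cdot) - \delta t\, f(t_k, \cdot, \hat{u}(\bt_k, \cdot))$. The key observation is that the two subtracted terms do not depend on $\bt$, so $\nabla_\bt \rdto_k(\bt, \cdot) = \nabla_\bt \hat{u}(\bt, \cdot)$ for every $\bt$; in particular the Gauss--Newton matrix \eqref{eq:DtO:PrecondJ} satisfies $\bfJ_k(\bt) = \langle \nabla_\bt \hat{u}(\bt, \cdot), \nabla_\bt \hat{u}(\bt, \cdot)\rangle_M = \bfP(\bt)$, with $\bfP$ as defined in \eqref{eq:OtD:MatrixP}.

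Next I would evaluate everything at the Gauss--Newton initialization $\bftheta_{k+1}^{(0)} = \bftheta_k$. There the residual collapses, $\rdto_k(\bt_k, \cdot) = -\delta t\, f(t_k, \cdot, \hat{u}(\bt_k, \cdot))$, and its gradient is $\nabla_\bt \hat{u}(\bt_k, \cdot)$. Substituting these, together with the step size $\alpha = 1$, into \eqref{eq:Compute:DtO:GaussNewton} with $l = 0$ (and $L = 1$, so $\bftheta_{k+1} = \bftheta_{k+1}^{(1)}$) yields
\[
\bfP(\bt_k)\,\bt_{k+1} = \bfP(\bt_k)\,\bt_k + \delta t\,\langle \nabla_\bt \hat{u}(\bt_k, \cdot), f(t_k, \cdot, \hat{u}(\bt_k, \cdot))\rangle_M\, .
\]
Subtracting $\bfP(\bt_k)\bt_k$ from both sides and expanding $\bfP(\bt_k) = \langle \nabla_\bt \hat{u}(\bt_k, \cdot), \nabla_\bt \hat{u}(\bt_k, \cdot)\rangle_M$ produces exactly \eqref{eq:OtD_explicit}, which is the normal-equation (first-order optimality) form of the OtD explicit-Euler step \eqref{eq:OtD_explicitReg}. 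Since this holds for each $k$, the whole trajectory $\bftheta_1, \dots, \bftheta_K$ satisfies the OtD conditions.

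There is no genuine analytic difficulty here; the argument is essentially a substitution, so the "obstacle" is mostly a matter of bookkeeping. The one point that deserves care is the status of the equivalence when $\bfP(\bt_k)$ is singular: then \eqref{eq:Compute:DtO:GaussNewton} does not determine $\bftheta_{k+1}$ uniquely, but the displayed identity still holds for \emph{every} Gauss--Newton iterate, so any such iterate satisfies \eqref{eq:OtD_explicit}, and conversely \eqref{eq:OtD_explicit} only pins down $\bt_{k+1} - \bt_k$ modulo $\ker \bfP(\bt_k)$ as well; hence the correspondence is exactly at the level of these normal equations (consistent with the discussion around \eqref{eq:Schemes:OtD:FirstOrderOpti_NG_compact}). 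I would also flag explicitly that the equivalence uses Gauss--Newton step size $\alpha = 1$: for a general $\alpha$ one obtains \eqref{eq:OtD_explicit} with $\delta t$ replaced by $\alpha\,\delta t$, i.e., an OtD explicit-Euler step with a rescaled time step.
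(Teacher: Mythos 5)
Your proof is correct and follows essentially the same route as the paper's: specialize the residual to $\zeta=1$, observe that $\nabla_\bt \rdto_k(\bt,\cdot)=\nabla_\bt\hat{u}(\bt,\cdot)$ and $\rdto_k(\bt_k,\cdot)=-\delta t\, f(t_k,\cdot,\hat{u}(\bt_k,\cdot))$, and substitute into the one-step Gauss--Newton update to recover \eqref{eq:OtD_explicit}. Your explicit flagging of the step size $\alpha=1$ (which the paper's proof leaves implicit) and of the non-uniqueness when $\bfP(\bt_k)$ is singular are worthwhile clarifications, but they do not change the argument.
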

\begin{proof}
For the explicit Euler scheme in DtO, we have 
\begin{equation}\label{eq:SecSimilar:rkDtO}
        r_k^{\text{DtO}} (\bftheta, \cdot)= \hat{u}( \bftheta,\cdot)-\big(\hat{u}(\bt_{k},\cdot)+\delta t f(t_k,\cdot, \hat{u}( \bt_{k},\cdot))\big)\, .
    \end{equation}
    Plugging $\bftheta_k$ into \eqref{eq:SecSimilar:rkDtO} provides $r_k^{\text{DtO}} (\bftheta_k,\cdot)=-\delta t f(t_k,\cdot, \hat{u}(\bt_{k},\cdot))$ and $\nabla_\bt  r_k^{\text{DtO}}(\bt,\cdot)=\nabla_\bt \hat{u}(\bt,\cdot)$.
    Plugging these into  the one-step Gauss-Newton method \eqref{eq:Compute:DtO:GaussNewton} with $\bt^{(0)}_{k+1}=\bt_{k}$ and $L=1$, we obtain that the system of equations
    \begin{align}
         \left\langle\nabla_\bt \hat{u}(\bt_{k},\cdot), \nabla_\bt \hat{u}(\bt_{k},\cdot)\right\rangle_M (\bt_{k+1}-\bt_{k}) = \delta t\left\langle\nabla_{\bt }\hat{u}(\bt_{k},\cdot),    f(t,\cdot, \hat{u}(\bt_{k},\cdot)) \right\rangle_M  \, .
         \label{eq:one-stepGaussNewtonDtO}
    \end{align}
    The same condition is obtained when applying the explicit Euler discretization to OtD schemes, which is given in \eqref{eq:OtD_explicit}. 
\end{proof}

If multiple Gauss-Newton iterations are performed, then the OtD and DtO dynamics can become different. Additionally, for other time discretization schemes than explicit Euler, the dynamics of OtD and DtO schemes can be different even with one-step Gauss-Newton optimization. 

\subsection{OtD schemes for gradient flows}\label{sec:OtDGradientFlows}
In this section, we focus on a specific type of evolution equation that can be described as gradient flows in a function space. These equations are widespread in physics \cite{peletier2014variational} and also arise commonly from optimization and sampling algorithms~\cite{ambrosio2005gradient,trillos2023optimization,chen2023sampling}. Under this context, we show the equivalence between the OtD schemes for integrating the gradient flow equations and the natural gradient descent algorithm for optimization within a parametric class.

\subsubsection{Gradient flows on Riemannian manifolds}
Gradient flow equations can be defined on Riemannian manifolds, under the associated geometry. Consider a Riemannian manifold $\mathcal{N}$ of functions with domain $\Omega \subset \bR^d$ and range $\mathbb{R}$. We denote the tangent space at $u \in \mathcal{N}$ as $T_u\mathcal{N}$ and the associated Riemannian metric as $g_u:T_u\mathcal{N}\times T_u\mathcal{N} \to \bR $. The inner product and norm of the tangent space are $\langle\cdot,\cdot\rangle_{g_u}$ and $\|\cdot\|_{g_u}$, respectively. An important special case is when the manifold is a Hilbert space, which we will consider in our examples below. Given an objective function $E:\mathcal{N} \to \bR$ that is continuously differentiable, its Riemannian gradient is denoted by $\nabla_u E \in T_u\mathcal{N}$ with the property
\[\langle \nabla_u E, v\rangle_{g_u} = \lim_{\epsilon \to 0} \frac{E(\gamma(\epsilon)) - E(\gamma(0))}{\epsilon}\, , \]
where $\gamma(\epsilon)$ is any smooth curve on $\mathcal{N}$ satisfying $\gamma(0) = u$ and $\gamma'(0) = v \in T_u\mathcal{N}$.
The gradient flow equation is a time-dependent PDE for functions with domain $\Omega$,
\begin{equation}\label{eq:OtDGrad:ManiFlow}
\partial_t u = -\nabla_u E \, .
\end{equation}

We apply the OtD scheme with the inner product $\langle \cdot, \cdot \rangle_{g_{{u}}}$ to the gradient flow equation \eqref{eq:OtDGrad:ManiFlow} and obtain
\begin{equation*}
\begin{aligned}
     \left\langle\nabla_\bt \hat{u}(\bt(t), \cdot), \nabla_\bt \hat{u}(\bt(t), \cdot)  \right\rangle_{g_{\hat{u}}}\dot{\bt}(t) & =\left\langle\nabla_\bt \hat{u}(\bt(t), \cdot), \nabla_u E|_{u = \hat{u}(\bt(t),\cdot)}(\cdot) \right\rangle_{g_{\hat{u}}}\\
     & = \nabla_{\bftheta} E(\hat{u}(\bt(t), \cdot))\, .
\end{aligned}
\end{equation*}
Then, with time step size $\delta t$, the OtD scheme with the explicit Euler discretization is
\begin{equation}\label{eq:Disc:GradientFlowNGD}
\bt_{k+1} = \bt_{k} - \delta t \bfP_k^{-1}\nabla_{\bftheta} L(\bftheta_k)\, ,
\end{equation}
where we define $L(\bftheta) = E(\hat{u}(\bftheta,\cdot))$ and $\bfP_k = \left\langle\nabla_\bt \hat{u}(\bt_{k},\cdot), \nabla_\bt \hat{u}( \bt_{k},\cdot)\right\rangle_{g_{\hat{u}}}$, which is assumed to be invertible. 
One can see that the dynamics given in \eqref{eq:Disc:GradientFlowNGD} are the same as the ones obtained when applying preconditioned gradient descent to minimizing $L$ over the parameter space $\Theta$. %
As a consequence, applying OtD to gradient flows with the inner product $\langle \cdot, \cdot \rangle_{g_{\hat{u}}}$ and explicit Euler discretizations will lead to the same parameter updates as applying preconditioned gradient descent under the geometry induced by $\langle \cdot, \cdot \rangle_{g_{\hat{u}}}$ over the parameter space $\Theta$. Such preconditioned gradient descent algorithms are also known as natural gradient descent; see \cite{JMLR:v21:17-678}. 
Notice that the inner product $\langle \cdot, \cdot \rangle_{g_{\hat{u}}}$ depends on $\hat{u}(\bftheta, \cdot)$ and thus changes over the iterations $k \in \mathbb{N}$.

\subsubsection{Examples of gradient flows in Hilbert spaces and on manifolds}
Let us first consider the least-squares loss $E: \Ucal \to \mathbb{R}, v \mapsto \frac{1}{2}\|v - g\|_{L^2(\Omega)}^2$, where $v, g$ are functions in the Hilbert space $L^2(\Omega)$ 
equipped with the $L^2(\Omega)$ inner product. 
The gradient is $\nabla E(v) = v - g$ and also a function in $L^2(\Omega)$. Applying OtD as described in the previous paragraph gives \eqref{eq:Disc:GradientFlowNGD} with $\nabla_{\bftheta} E(\hat{u}(\bftheta, \cdot)) = \langle \nabla_{\bftheta}\hat{u}(\bftheta, \cdot), \hat{u}(\bftheta, \cdot) - g\rangle_{L^2(\Omega)}$, which is the gradient $\nabla_{\bftheta} L$ of the least-squares loss 
$L(\bftheta) = \frac{1}{2}\|\hat{u}(\bftheta, \cdot) - g\|_{L^2(\Omega)}^2$ 
over the parameter space $\Theta$. Applying preconditioned gradient descent to the least-squares loss over the parameter space $\Theta$ is equivalent to applying OtD to the corresponding gradient flow over $L^2(\Omega)$.

If $\mathcal{N}$ is the probability density space endowed with the Fisher-Rao metric, the matrix $\bfP_k$ is known as the Fisher information matrix \cite{amari2016information}. 
In fact, using the formula for the Fisher-Rao metric, we have
\begin{equation*}
\begin{aligned}
    \left\langle\nabla_\bt \hat{u}(\bt(t), \cdot), \nabla_\bt \hat{u}(\bt(t), \cdot)  \right\rangle_{g_{\hat{u}}} &= \int \frac{\nabla_\bt \hat{u}(\bt(t), \bx) (\nabla_\bt \hat{u}(\bt(t), \bx))^T}{\hat{u}(\bt(t),\bx)} \, {\rm d}\bx\\
    & = \mathbb{E}_{\bx \sim \hat{u}(\bt(t),\cdot)}[\nabla_\bt \log \hat{u}(\bt(t), \bx) (\nabla_\bt \log  \hat{u}(\bt(t), \bx))^T]\, ,
\end{aligned}
\end{equation*}
where the last term is the definition of the Fisher information matrix. Note that here, $\hat{u}(\bt(t), \cdot)$ is a probability density.
The concept of a natural gradient is first proposed in such context in \cite{amari1998natural}, where $E$ is the loss in maximum likelihood estimates. Therefore, OtD schemes with explicit Euler discretizations can recover this natural gradient descent algorithm when applied to gradient flow equations in the probability density space under the Fisher-Rao metric.

If $\mathcal{N}$ is taken to be the space of quantum wave functions endowed with the Fubini-Study metric, then $\bfP_k$ is related to the Quantum Geometric Tensor \cite{stokes2020quantum}. Furthermore, if $E$ is the Rayleigh quotient for a given Hamiltonian, then the natural gradient descent algorithm is equivalent to stochastic configuration in variational quantum Monte Carlo \cite{becca_sorella_2017}. Our discussion implies that such algorithm can also be recovered by applying the OtD scheme to the gradient flow of $E$ on $\mathcal{N}$.
There are also many other examples, such as the energy natural gradient descent for solving PDEs \cite{muller2023achieving} and in PDE-based optimization \cite{nurbekyan2023efficient} and statistics \cite{chen2020optimal}.

It is worth noting that in the preceding discussions in this section, we consistently use the same inner product $\langle \cdot, \cdot \rangle_{M} = \langle \cdot, \cdot \rangle_{g_{\hat{u}}}$ for the OtD scheme and the gradient, which results in the equivalence between OtD  and the natural gradient descent. However, if the metric $\|\cdot\|_M$ used in the OtD scheme differs from $\|\cdot\|_{g_{\hat{u}}}$, then OtD schemes can give rise to distinct dynamics compared to natural gradient descent. This aspect holds practical significance, as empirical evidence suggests that taking these differences into account may potentially lead to faster convergence.
For instance, in the quantum Monte Carlo algorithm proposed in \cite{neklyudov2023wasserstein}, the gradient flow is defined using the Wasserstein or the Wasserstein-Fisher-Rao metric, while their algorithm is equivalent to OtD schemes with $\|\cdot\|_M$ chosen as the Fisher-Rao metric. In this regard, OtD schemes might offer the potential for generating a wider range of efficient algorithms than natural gradient descent. We leave this as a future avenue for exploring a more systematic design of $\|\cdot\|_M$. Similarly, it would be interesting to explore DtO schemes in these settings too.

\section{Conclusions}
\label{sec: Conclusions}
~While many seemingly different sequential-in-time training methods for nonlinear parametrizations have been developed by various communities for a wide range of problems, this work identifies two broad types of schemes: OtD and DtO schemes. 

The results of this work show that first optimizing and then discretizing in time (OtD) versus first discretizing and then optimizing (DtO) leads to fundamentally different schemes for training nonlinear parametrization sequentially in time. The presented analysis demonstrates that the expressiveness of the tangent spaces of nonlinear parametrization manifolds is key in OtD schemes, whereas the expressiveness of the nonlinear parametrization itself dominates the error of DtO schemes. While the optimization step of OtD schemes is linear if explicit time integration schemes are used, there can be at least numerically a collapse of the tangent space which means dominating residual components can grow unbounded. While DtO schemes circumvent the tangent space collapse phenomenon, they inherently lead to non-convex optimization problems that are challenging to solve numerically to high precision. A perhaps surprising result is that the parameter trajectories of stationary points (rather than optima) in DtO schemes leads to stable dynamics. 

Under the strong assumption of just taking one optimization step with the Gauss-Newton method in DtO schemes, we showed that OtD and DtO solutions coincide, which admits the interpretation that the OtD dynamics correspond to first-order approximations of DtO dynamics, in this special case. The interpretation is also in agreement with the fact that OtD schemes require solving linear least-squares problems over time, even though the parametrizations depend nonlinearly on the parameters. 

Abstractly identifying sequential-in-time methods as either being OtD or DtO schemes paves the way for a better understanding of theoretical and numerical aspects as well as drawing connections between the different methods for leveraging synergies. For example, we showed that a large class of natural gradient descent methods can be described as OtD schemes applied to gradient flows under various metrics. One example of insight gained with this point of view of interpreting such methods as OtD schemes on gradient flows is allowing to separate the metric used for the gradient from the one used for the OtD dynamics, which could hold practical value in developing novel and more efficient algorithms.

The results of this work open several avenues of future research. First, a better understanding of OtD schemes specifically for gradient flows is of interest, as these are important examples as we discussed above. Second, we showed that the optimization step in DtO scheme is challenging while at the same time offering structure that can be exploited to derive more efficient optimization methods that explicitly target DtO schemes. Third, an open research question is connecting OtD and DtO schemes when implicit time integration schemes are applied.

\bibliographystyle{siamplain}
\bibliography{ref}

\end{document}